\newtheorem{thm}{Theorem}[section]
\newtheorem{lem}[thm]{Lemma}
\newtheorem{cor}[thm]{Corollary}
\newtheorem{prop}[thm]{Proposition}
\newtheorem{assum}[thm]{Assumption}
\theoremstyle{definition}
\newtheorem{defn}[thm]{Definition}
\newtheorem{defns}[thm]{Definitions}
\newtheorem{notation}[thm]{Notation}
\newtheorem{ex}[thm]{Example}
\theoremstyle{remark}
\newtheorem{rem}[thm]{Remark}
\numberwithin{equation}{section}
\newcommand{\thmref}[1]{Theorem~\ref{#1}}
\newcommand{\corref}[1]{Corollary~\ref{#1}}
\newcommand{\secref}[1]{\S\ref{#1}}
\newcommand{\propref}[1]{Proposition~\ref{#1}}
\newcommand{\lemref}[1]{Lemma~\ref{#1}}
\newcommand{\Mod}{\mathsf{Mod}}
\newcommand{\Alg}{\mathsf{Alg}}
\newcommand{\AlgO}{\mathsf{Alg}_{\Oo}}
\newcommand{\Sym}{\mathsf{Sym}}
\newcommand{\biSym}{\mathsf{biSym}}
\newcommand{\modd}{{\text{-mod}}}
\newcommand{\co}{\circ_{\Oo}}
\newcommand{\U}{{\mathcal  O}}
\newcommand{\Oo}{{\mathcal  O}}
\newcommand{\sm}{\wedge}
\newcommand{\ra}{\rightarrow}
\newcommand{\xra}{\xrightarrow}
\newcommand{\la}{\leftarrow}
\newcommand{\xla}{\xleftarrow}
\newcommand{\hra}{\hookrightarrow}
\begin{document}

\title[Composition products]{Operad bimodules, and composition products on Andr\'e--Quillen filtrations of algebras}

\author[Kuhn]{Nicholas J.~Kuhn}

\address{Department of Mathematics \\ University of Virginia \\ Charlottesville, VA 22904}

\email{njk4x@virginia.edu}


\author[Pereira]{Lu\'is A.~Pereira}
\address{Department of Mathematics \\ University of Virginia \\ Charlottesville, VA 22904}
\email{lp2h@virginia.edu}
\thanks{}

\date{February 21, 2016.}

\subjclass[2000]{Primary 55P43; Secondary 18D50, 18G55, 55P48.}

\begin{abstract}  If $\Oo$ is a reduced operad in a symmetric monoidal category of spectra ($S$--modules), an $\Oo$--algebra $I$ can be viewed as analogous to the augmentation ideal of an augmented algebra.  Implicit in the literature on Topological Andr\'e--Quillen homology is that such an $I$ admits a canonical (and homotopically meaningful) decreasing $\Oo$--algebra filtration $I \xla{\sim} I^1 \la I^2 \la I^3 \la \dots$ satisfying various nice properties analogous to powers of an ideal in a ring.

In this paper, we are explicit about these constructions. With $R$ a commutative $S$--algebra, we use the bar construction as a derived version of functors of the form $I \mapsto M\co I$, where $M$ is an $\Oo$--bimodule, and $I$ is an $\Oo$--algebra in $R$--modules.   Letting $M$ run through a decreasing $\Oo$--bimodule filtration of $\Oo$ itself then yields the augmentation ideal filtration as above.  We then note that the composition structure of the operad induces products $(I^i)^j \ra I^{ij}$, fitting nicely with previously studied structure.

As a formal consequence, an $\Oo$--algebra map $I \ra J^d$ induces compatible maps $I^n \ra J^{dn}$ for all $n$.  This is an essential tool in the first author's study of Hurewicz maps for infinite loop spaces, and its utility is illustrated here with a lifting theorem.

\end{abstract}

\maketitle

\section{Introduction} \label{INTRO}

Let $S\modd$ be the category of symmetric spectra \cite{hss}, one of the standard symmetric monoidal models for the category of spectra.  Let $S$ denote the sphere spectrum, and let $\Oo$ be a reduced operad in $S\modd$.  If $R$ is a commutative $S$--algebra, we let $\AlgO(R)$ denote the category of $\Oo$--algebras in $R$--modules.

The starting point of this paper is the observation that, if $M$ is a reduced $\Oo$--bimodule, and $I\in \AlgO(R)$, then $M \co I$ is again in $\AlgO(R)$, and that many interesting constructions on $\Oo$--algebras are derived versions of functors of $I$ of this form.

Our first goal here is to present the basic properties of a suitable derived version of this construction, the bar construction $B(M,\Oo,I)$, noting how structure on the category of $\Oo$--bimodules is reflected in the category of endofunctors of $\Oo$--algebras.  Perhaps the least familiar of these is that a levelwise homotopy cofibration sequence in the bimodule variable $M$ induces a homotopy fibration sequence in $\AlgO(R)$: see \thmref{NAT IN M THM}(b).

We then apply these general results to recover constructions on $\Oo$--algebras implicitly studied earlier by various people (notably \cite{HH}), but add new structure.

An $\Oo$--algebra $I$ can be viewed as similar to the augmentation ideal in an augmented ring.  Applying our bar construction to a natural decreasing $\Oo$--bimodule filtration of $\Oo$ itself, shows that $I \in \AlgO(R)$ admits a homotopically meaningful natural `augmentation ideal filtration':
$$ I \xla{\sim} I^1 \la I^2 \la I^3 \la \dots$$
with `$I^n/I^{n+1}$' determined by $\Oo(n)$ and its topological Andr\'e-Quillen $R$--module $TQ(I)$. $TQ(I)$ can be informally viewed as `$I/I^2$': its study goes back to \cite{basterra}.  Our model makes it easy to analyze connectivity properties: if $R$ and $\Oo$ are connective and $I$ is $(c-1)$ connected, then $I^n$ will be $(nc-1)$--connected.

We now take advantage of the observation that a pairing of bimodules
$$L \co M \ra N$$
will induce a natural transformation of functors of $I$
$$ L \co (M \co I) \ra N \co I,$$
and similarly on our derived model.
Applied to pairings among our $\Oo$--bimodule filtration of $\Oo$ induced by its operad structure, we obtain natural pairings
$$ (I^n)^m \ra I^{mn}$$
satisfying expected properties.  As a formal consequence, an $\Oo$--algebra map $I \ra J^d$ induces compatible maps
$I^n \ra J^{dn}$ for all $n$.

This seems to be fundamental structure which has not previously appeared in the literature.
The next result is a consequence illustrating its utility.
\begin{thm}  Let $f:I \ra J$ be a map in the homotopy category $ho \AlgO(R)$.  If $f$ factors as $f = f_s \circ \dots \circ f_1$ such that $TQ(f_i)$ is null for all $i$, then there is a lifting in $ho \AlgO(R)$:
\begin{equation*}
\xymatrix{
 & J^{2^s} \ar[d]  \\
I \ar[r]^f \ar@{.>}[ru]^{\tilde f}& J. }
\end{equation*}
\end{thm}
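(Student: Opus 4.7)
The strategy is to reduce the theorem to a one-step lifting statement and iterate it using the formal consequence that an $\Oo$--algebra map $A \to B^d$ induces compatible maps $A^n \to B^{dn}$ for all $n$.  The one-step statement is:  \emph{$(\ast)$\ if $g: A \to B$ in $ho\AlgO(R)$ has $TQ(g)$ null, then $g$ factors, up to homotopy, as $A \to B^2 \to B^1 \xla{\sim} B$.}

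Granting $(\ast)$, the induction is clean.  Write $f = f_s \circ \dots \circ f_1$ with $f_i: I_{i-1} \to I_i$, $I_0 = I$, $I_s = J$.  By $(\ast)$, each $f_i$ lifts to $\tilde f_i: I_{i-1} \to I_i^2$, and the formal consequence applied to $\tilde f_i$ furnishes compatible maps $\tilde f_i^{[n]}: I_{i-1}^n \to I_i^{2n}$ for every $n \ge 1$.  Choosing $n = 2^{i-1}$ at the $i$th stage and composing yields
\[
\tilde f:\ I = I_0^1 \xra{\tilde f_1^{[1]}} I_1^2 \xra{\tilde f_2^{[2]}} I_2^4 \to \dots \xra{\tilde f_s^{[2^{s-1}]}} I_s^{2^s} = J^{2^s}.
\]
Compatibility of the $\tilde f_i^{[n]}$ with the filtration projections $I_i^{2^i} \to I_i$ then forces $\tilde f$ followed by $J^{2^s} \to J$ to recover $f$ in $ho\AlgO(R)$, as required.

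For $(\ast)$ itself I would use the first layer of the augmentation ideal filtration.  The bimodule cofibre sequence $\Oo^2 \to \Oo^1 \to \Oo^1/\Oo^2$ together with \thmref{NAT IN M THM}(b) produces a homotopy fibre sequence $B^2 \to B^1 \to Q(B)$ in $\AlgO(R)$.  By the identification of $\Oo^1/\Oo^2$ with $\Oo(1) = S$ as a trivial bimodule, $Q(B)$ should be the trivial $\Oo$--algebra on $TQ(B)$, and the structural map $B \simeq B^1 \to Q(B)$ should correspond, under the defining adjunction of $TQ$, to the identity of $TQ(B)$.  Consequently the composite $A \to B \to Q(B)$ corresponds to $TQ(g)$, which is null by hypothesis, so $g$ lifts through $B^2 \to B^1 \simeq B$.

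The main obstacle is this identification of the first subquotient of the filtration as the trivial $\Oo$--algebra on $TQ(B)$ together with its adjunction-universal structural map.  This should be a direct calculation in the bar-construction model once the bimodule filtration of $\Oo$ has been set up, after which the theorem follows by the formal bookkeeping above.
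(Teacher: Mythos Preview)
Your proposal is correct and follows essentially the same route as the paper: a one-step lift through $B^2$ via the fibre sequence $B^2 \to B^1 \to B^1_2$ and the $TQ \dashv z$ adjunction, then iteration using the induced maps $A^n \to B^{2n}$.  The paper packages the one-step lift exactly as you do, using the exact sequence of pointed sets $[A,B^2]_{\Alg} \to [A,B]_{\Alg} \to [A,B^1_2]_{\Alg}$ together with $B^1_2 = z(TQ(B))$ and Corollary~\ref{TQ homotopy cor}.

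One small correction: you write ``$\Oo(1)=S$ as a trivial bimodule,'' but the paper does not assume $\Oo(1)=S$; in general $\Oo(1)$ is only an associative $S$--algebra, $TQ(B)$ lands in $R\Oo(1)$--modules, and ``trivial $\Oo$--algebra'' should be read as $z(TQ(B))$, the pullback along $\Oo \to \Oo(1)$.  Your argument goes through verbatim with this adjustment, since the adjunction you invoke is precisely $[TQ(A),N]_{\Mod}\simeq [A,z(N)]_{\Alg}$ and the structural map $B^1 \to B^1_2$ is the derived unit of that adjunction.
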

\noindent We restate this, with slightly different notation, as \thmref{AQ lifting thm}.

Further applications in this spirit can be seen in work by the first author on Hurewicz maps of infinite loopspaces \cite{kuhn hurewicz}, the project whose needs motivated this paper.  Also critical in \cite{kuhn hurewicz}, is that we verify that there are sensible `change-of-rings' formulae for our constructions as $R$ varies.

The paper is organized as follows.

In \secref{GEN RESULTS SEC}, we first introduce the setting in which we wish to work.  This includes a well chosen, and slightly delicate, model structure on $\AlgO(R)$, which piggybacks off of the `positive' model structure on $S$--mod first exploited in \cite{shipley}, and is in the spirit of \cite{H07}.  We then state the basic homotopy properties of our derived version of $M \co I$.  To summarize: what we find most compelling is that on one hand, our constructions connect nicely to $TQ(I)$, and on the other, they are well suited to iteration using the monoidal properties of $\circ$.

In \secref{comp sec}, we apply the result of the previous section to the augmentation ideal filtration of an $\Oo$--algebra, and deduce lifting results as above.

The deeper proofs from \secref{GEN RESULTS SEC} are deferred to \secref{deferred proofs sec}, which itself is supported by Appendix \ref{app a}.  Much of the technical work consists of generalizing results and arguments from \cite{Pereira2014} from $S$--mod to $R$--mod for a general $R$.

\section{General results about derived composition products} \label{GEN RESULTS SEC}

\subsection{Our categories of modules and algebras} \label{OUR SETTING}

In this paper, the category of $S$--modules will mean the category of symmetric spectra as defined in \cite{hss}: here we recall that $X \in S\modd$ consists of a sequence $X_0,X_1,X_2, \dots$ of simplicial sets equipped with extra structure.

With the smash product as product and sphere spectrum $S$ as unit, $S\modd$ is a closed symmetric monoidal category. There is a notion of weak equivalence, and various model structures on $S\modd$ compatible with these, such that the resulting quotient category models the standard stable homotopy category.

Recall that a symmetric sequence in $S\modd$ then consists of a sequence
$$X(0),X(1),X(2),\dots,$$ where $X(n)$ is a symmetric spectrum equipped with an action of the $n$th symmetric group $\Sigma_n$.

The category of such symmetric sequences in $S\modd$, $\Sym(S)$, admits a composition product $\circ$ defined by
\begin{equation}\label{COMPPROD EQ}
 (X \circ Y)(s) = \bigvee_r X(r) \sm_{\Sigma_r} \left(\bigvee_{\phi: \bf s \ra \bf r} Y(s_1(\phi)) \sm \dots \sm Y(s_r(\phi))\right),
\end{equation}
where ${\bf s} = \{1,\dots,s\}$ and $s_k(\phi)$ is the cardinality of $\phi^{-1}(k)$.
With this product, $(\Sym(S), \circ, S(1))$ is monoidal, where $S(1) = (*,S,*,*, \dots)$.

An operad $\Oo$ is then a monoid in this category, and one makes sense of left $\Oo$--modules, right $\Oo$--modules, and $\Oo$--bimodules in the usual way.  Furthermore, if $X$ is a right $\Oo$--module, and $Y$ is a left $\Oo$--module, the symmetric sequence $X \co Y$ can be defined as the coequalizer in $\Sym(S)$ of the two evident maps
$$ X \circ \Oo \circ Y \begin{array}{c} \longrightarrow \\[-.1in] \longrightarrow
\end{array} X \circ Y.$$
Extra structure on $X$ or $Y$ then can induce evident extra structure on $X \co Y$.

For the purposes of this paper, it is natural to require that our operads $\Oo$ and bimodules $M$ be reduced: $\Oo(0)=*=M(0)$.  By contrast, an $\Oo$--algebra is a left $\Oo$--module $I$ concentrated in level 0: $I(n)=*$ for all $n>0$.

If $R$ is a commutative $S$--algebra, these definitions and constructions extend to the category of $R$--modules.  Furthermore, one can mix and match: for example, if $X$ is a symmetric sequence in $S\modd$ and $Y$ is a symmetric sequence in $R\modd$, $X \circ Y$ will be the symmetric sequence in $R\modd$ with
$$ (X \circ Y)(s) = \bigvee_r X(r) \sm_{\Sigma_r} \left(\bigvee_{\phi: \bf s \ra \bf r} Y(s_1(\phi)) \sm_R \dots \sm_R Y(s_r(\phi))\right).$$

We denote by $\Sym(R)$ the category of symmetric sequences in $R\modd$,  $\AlgO(R)$ the category of $\Oo$-algebras in $R\modd$ and $\Mod^l(R)$ the category of left $\Oo$--modules in $\Sym(R)$.

\subsection{Model structures}\label{MODELSTRUCTURE SEC}

We specify model structures on the various categories just described.

We accept as given the $S$--model structure on symmetric spectra (called $S$--modules in this paper) as defined in \cite[Defn.5.3.6]{hss} and \cite[Thm.2.4]{shipley}.  This structure is monoidal with respect to the smash product \cite[Cor.5.3.8]{hss}.

We then give $\Sym(S)$ its associated {\em injective} model structure: weak equivalences and cofibrations are those morphisms which are levelwise weak equivalences and cofibrations in $S\modd$.  That this structure exists was checked in \cite{Pereira2014}: in that reference, see Theorem 3.8 and \S5.3 \footnote{This structure is different from the associated projective structure used in \cite{H07,H09,HH}.}.

As in \cite[\S 15]{mmss}, \cite{shipley}, \cite{HH}, and \cite[\S 5.3]{Pereira2014}, we need `positive' variants of these model structures. Weak equivalences will be as before, but there are fewer cofibrations: for $X \ra Y$ in $S\modd$ to be a positive cofibration, we now insist that $X_0 \ra Y_0$ also be an isomorphism, and for $M \ra N$ in $\Sym(S)$ to be a positive cofibration, we now insist that $M(0)_0 \ra N(0)_0$ also be an isomorphism\footnote{On $\Sym(S)$, this agrees with \cite{Pereira2014} but is different from \cite{HH}, where it is required that $M(n)_0 \ra N(n)_0$ be an isomorphism for all $n$.}.  It is worth noting that if $M \in \Sym(S)$ is reduced, then it is positive cofibrant exactly when each $M(n)$ is cofibrant, when viewed in $S\modd$.

Given a commutative $S$--algebra $R$, the positive $R$--model structure on $R$--modules is then defined to be the projective structure induced from that on $S\modd$ with its positive structure: weak equivalences and fibrations in $R\modd$ are the maps which are weak equivalences and positive fibrations in $S\modd$.   Similarly, we define the positive structure on $\Sym(R)$, the category of symmetric sequences in $R\modd$, to be the projective structure induced from that on $\Sym(S)$ with its positive structure: weak equivalences and fibrations in $\Sym(R)$ are the maps which are weak equivalences and positive fibrations in $\Sym(S)$.

The following theorem is an immediate consequence of \cite[Thm. 1.4]{Pereira2014}  (see also \cite{H07}), and special cases go back to \cite{shipley}.

\begin{thm}\label{PROJPOS THM}
  $\Alg_{\Oo}(R)$ has a projective model structure induced from the positive structure on $R\modd$: $f: I \ra J$ is a weak equivalence if it is one in $R\modd$ (and thus in $S\modd$), and a fibration if it is a positive fibration in $R\modd$ (and thus in $S\modd$). Similarly, $\Mod^l_{\Oo}(R)$ has a projective model structure induced from the positive structure on $\Sym(R)$: $f: M \ra N$ is a weak equivalence if it is one in $\Sym(R)$ (and thus in $\Sym(S)$), and a fibration if it is a positive fibration in $\Sym(R)$ (and thus in $\Sym(S)$).
\end{thm}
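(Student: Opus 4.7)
The plan is to obtain both model structures via Kan's transfer (lifting) lemma applied to the free--forgetful adjunctions
\[
\Oo \circ (-) : R\modd \rightleftarrows \AlgO(R) : U
\qquad \text{and} \qquad
\Oo \circ (-) : \Sym(R) \rightleftarrows \Mod^l_{\Oo}(R) : U,
\]
where in each case $R\modd$ (resp.\ $\Sym(R)$) carries its positive structure. By the definition of the transferred structure, a morphism on the right is declared a weak equivalence, respectively fibration, exactly when its image under $U$ is one on the left. Thus once the transferred structure is shown to exist, its description matches the statement of the theorem verbatim, and the two cases are completely parallel. I will therefore focus on $\AlgO(R)$; the argument for $\Mod^l_{\Oo}(R)$ differs only in notation.

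The first step is to confirm the formal hypotheses of the transfer lemma: that the positive $R$-model structure on $R\modd$ is cofibrantly generated, with generating sets $I$ and $J$ whose (co)domains are small with respect to the relevant relative cell subcategories, and that $\Oo \circ (-)$ preserves such smallness (which holds because $U$ preserves filtered colimits). Both properties pass formally from the positive $S$-model structure on $S\modd$ of \cite{shipley}, the projective lift from $S\modd$ to $R\modd$, and the fact that $\Oo \circ (-)$ is a left adjoint built from smash products with $\Oo(n)$ over $\Sigma_n$.

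The main content, and the step I expect to be the only real obstacle, is the acyclicity axiom of the transfer lemma: for every positive acyclic cofibration $f : X \ra Y$ in $R\modd$, every pushout in $\AlgO(R)$ of the form
\[
\xymatrix{
\Oo \circ X \ar[r] \ar[d]_{\Oo \circ f} & A \ar[d] \\
\Oo \circ Y \ar[r] & A \cup_{\Oo \circ X}(\Oo \circ Y)
}
\]
should become a weak equivalence after applying $U$. The standard approach is to equip the pushout with a natural filtration $A = B_0 \ra B_1 \ra B_2 \ra \cdots$ whose $n$th layer is computed, via $\Sigma_n$-coinvariants, from the $n$-fold smash powers of the cofiber $Y/X$ together with enveloping data manufactured from $A$ and the components $\Oo(k)$ of the operad. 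The positivity hypothesis forces $X \ra Y$ to be the identity in level zero, and this is precisely what makes the $\Sigma_n$-actions appearing on $(Y/X)^{\sm n}$ sufficiently free so that taking coinvariants preserves weak equivalences. Verifying this carefully amounts to a levelwise check that a homotopy cofibration between positively cofibrant objects induces, after smashing and $\Sigma_n$-quotienting, a homotopy cofibration of the same connectivity.

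Since \cite[Thm.~1.4]{Pereira2014} establishes exactly this filtration analysis in the case $R = S$, the remaining task is to observe that every ingredient -- the small object argument, the filtration of pushouts, the freeness of the $\Sigma_n$-actions, and the homotopy invariance of smash powers of positive cofibrations -- is formal with respect to the base commutative $S$-algebra. Replacing $\sm$ by $\sm_R$ and $S\modd$ by $R\modd$ throughout, and invoking that $R\modd$ itself inherits a cofibrantly generated symmetric monoidal positive model structure from $S\modd$, introduces no new technical issues. The theorem then follows.
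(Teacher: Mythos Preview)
Your proposal is correct in outline: the transfer lemma with the standard pushout filtration is exactly how such model structures are built, and this is what lies behind the citation. The paper itself offers no proof at all; it simply states that the theorem ``is an immediate consequence of \cite[Thm.~1.4]{Pereira2014}'' (with pointers also to \cite{H07} and \cite{shipley}). So your sketch is an unpacking of the cited result rather than a departure from the paper's approach.

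One point deserves comment. Your final paragraph asserts that replacing $S$ by $R$ throughout ``introduces no new technical issues.'' For the bare existence of the transferred model structure this is true, but the claim is more delicate than you suggest: the paper itself observes in the appendix that not everything in \cite{Pereira2014} survives the passage to general $R$ (specifically, the characterization of cofibrations in \cite[Prop.~3.9]{Pereira2014} fails), and a fair amount of the appendix is devoted to working around this for the deeper results. A cleaner way to handle the present theorem, and one implicit in the paper's later Remark on the identifications $\Alg_{\Oo}(R)\simeq \Alg_{R\wedge\Oo}(S)$ and $\Mod^l_{\Oo}(R)\simeq \Mod^l_{R\wedge\Oo}(S)$, is to avoid redoing the filtration argument over $R$ altogether: regard $R\wedge\Oo$ as an operad in $S\modd$, apply \cite[Thm.~1.4]{Pereira2014} directly to it, and then note that since the positive structure on $R\modd$ is projective over positive $S\modd$, the resulting weak equivalences and fibrations coincide with those in the statement. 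This buys you the result without any appeal to ``the same argument works over $R$.''
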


The next lemma says that the model structure on $\Alg_{\U}(R)$ is really the same as the model structure on $\Mod^l_{\Oo}(R)$, restricted to the subcategory of modules concentrated in degree 0.

\begin{lem}  If $I \ra J$ is a cofibration in $\Alg_{\Oo}(R)$, then it is a cofibration in $\Mod^l_{\Oo}(R)$, when $I$ and $J$ are regarded as objects in $\Sym(R)$ concentrated in level 0.
\end{lem}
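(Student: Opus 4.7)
The plan is to exhibit the inclusion $\tilde j : \Alg_{\Oo}(R) \to \Mod^l_{\Oo}(R)$, which sends an $\Oo$-algebra $I$ to the left $\Oo$-module concentrated at level $0$ with $I$ in that slot, as the left half of a Quillen adjunction $\tilde j \dashv V$; preservation of cofibrations is then formal. The right adjoint is $V(M) = M(0)$, equipped with the $\Oo$-algebra structure obtained by restricting the left action map $\Oo \circ M \to M$ to arity $0$ (using the elementary observation that $(\Oo \circ M)(0) = \bigvee_n \Oo(n) \sm_{\Sigma_n} M(0)^{\sm n}$). The adjunction bijection is immediate because $\tilde j(I)(n) = *$ for all $n \geq 1$, so a left $\Oo$-module map $\tilde j(I) \to M$ is completely determined by, and uniquely extends from, an $\Oo$-algebra map $I \to V(M)$.

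The real work is to show that $V$ preserves fibrations and acyclic fibrations. Since weak equivalences in both model structures are levelwise weak equivalences in the underlying symmetric spectra, evaluation at arity $0$ preserves weak equivalences. For fibrations, \thmref{PROJPOS THM} reduces the question to showing that if $p: M \to N$ is a positive injective fibration in $\Sym(S)$, then $p(0): M(0) \to N(0)$ is a positive fibration in $S\modd$. I would prove this by testing the RLP along the adjunction $F_0 \dashv \operatorname{ev}_0$ between $S\modd$ and $\Sym(S)$: for any positive acyclic cofibration $j: A \to B$ in $S\modd$, the sequence map $F_0(j)$ (equal to $j$ at level $0$ and $\mathrm{id}_*$ elsewhere) is a levelwise cofibration, a levelwise weak equivalence, and an isomorphism at simplicial level $0$ (since $j_0$ is, by positivity); hence $F_0(j)$ is a positive acyclic cofibration in $\Sym(S)$. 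Transposing across $F_0 \dashv \operatorname{ev}_0$ then shows that $p(0)$ has the right lifting property against $j$, and the analogous argument handles acyclic fibrations.

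The main technical obstacle is this verification that $V$ preserves fibrations, which requires carefully tracking the definitions of the positive model structure on $S\modd$ and the positive injective structure on $\Sym(S)$. Once it is in place, the Quillen adjunction $\tilde j \dashv V$ implies that $\tilde j$ preserves cofibrations, which is the content of the lemma.
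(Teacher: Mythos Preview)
Your proof is correct and follows exactly the same approach as the paper: exhibit the inclusion as left Quillen by identifying its right adjoint $M \mapsto M(0)$ and checking that this right adjoint preserves fibrations and weak equivalences. The paper simply asserts that this check is easy, whereas you supply the details via the $F_0 \dashv \operatorname{ev}_0$ adjunction; your verification that $F_0$ carries positive acyclic cofibrations in $S\modd$ to positive acyclic cofibrations in the positive injective structure on $\Sym(S)$ is accurate.
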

\begin{proof}  The inclusion $\Alg_{\Oo}(R) \hra \Mod^l_{\Oo}(R)$ has right adjoint given by $M \mapsto M(0)$.  This is a Quillen pair, as it is easily checked that this right adjoint preserves weak equivalences and fibrations.
\end{proof}

\subsection{Cofibrancy assumption on $\Oo$ and first consequences}

Unless stated otherwise, we make the following standing cofibrancy assumption about our operad $\Oo$.

\begin{assum}  \label{COFIB ASSUM} The map $S(1) \ra \Oo$ is assumed to be a positive cofibration in $\Sym(S)$.
\end{assum}

As $\Oo(0)=*$ has been assumed earlier, equivalently this means that, in $S\modd$, $S\ra \Oo(1)$ is a cofibration, and $\Oo(n)$ is cofibrant for all $n$.

\begin{notation} Let $\AlgO(R)^{c}$ be the full subcategory of $\AlgO(R)$ consisting of $\Oo$--algebras in $R\modd$ which are cofibrant when just viewed as $R$--modules.
\end{notation}

A key advantage of our particular model structure on $\AlgO(R)$ is that the following property holds.

\begin{prop} \label{COFFGT PROP} The forgetful functor $\AlgO(R) \ra R\modd$ preserves cofibrations between cofibrant objects.  In particular, if $I$ is cofibrant in $\AlgO(R)$, then $I \in \AlgO(R)^{c}$.
\end{prop}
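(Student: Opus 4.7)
The plan is to reduce the statement to the analysis of a single pushout along a free map, and then invoke a standard arity filtration of such a pushout. The model structure on $\AlgO(R)$ is right-induced from the positive model structure on $R\modd$ via the free--forgetful adjunction $\Oo \circ (-) \dashv U$. Hence every cofibration between cofibrant objects in $\AlgO(R)$ is a retract of a transfinite composite of pushouts of free maps $\Oo \circ i$, where $i : X \ra Y$ is a generating positive cofibration in $R\modd$, with the cell filtration starting from the initial $\Oo$-algebra. Since $\Oo(0) = *$, the initial algebra is carried by $U$ to the initial (hence cofibrant) $R$-module $*$. Because $U$ preserves retracts, pushouts, and transfinite composites, and cofibrations in $R\modd$ are closed under all three operations, the statement that $U$ preserves cofibrations between cofibrant objects (and, as the special case of $\varnothing \ra I$, preserves cofibrant objects) will follow by induction along the cell filtration from the single assertion:

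($\ast$) If $A \in \AlgO(R)$ has $UA$ cofibrant in $R\modd$, and $A \ra A'$ is the pushout in $\AlgO(R)$ of a free map $\Oo \circ i$ with $i : X \ra Y$ a positive cofibration in $R\modd$, then $UA \ra UA'$ is a cofibration in $R\modd$.

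For ($\ast$) I would exhibit the standard arity filtration $UA = A'_0 \ra A'_1 \ra A'_2 \ra \dots$ with $UA' = \colim_n A'_n$, in which each stage sits in a pushout square in $R\modd$ along a map of the form
\[
\Oo(n) \sm_{\Sigma_n} Q^n_i(UA) \lra \Oo(n) \sm_{\Sigma_n} Y^{\sm n}(UA),
\]
where $Q^n_i(UA)$ is assembled from the $n$-fold pushout-product of $i$ with itself, decorated by smashing with copies of $UA$ over $R$ encoding the prior attaching data. The crucial input is that \emph{positivity} of $i$ (i.e.\ $X_0 \ra Y_0$ being an isomorphism) forces $\Sigma_n$ to act freely away from the basepoint on the pushout-product piece; combined with Assumption \ref{COFIB ASSUM} (cofibrancy of each $\Oo(n)$) and the cofibrancy of $UA$ in $R\modd$, this ensures the displayed map is a cofibration in $R\modd$. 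Iterating, $UA \ra UA'$ is a transfinite composite of cofibrations, hence a cofibration.

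The main obstacle is the equivariant cofibrancy bookkeeping for these pushout-product layers, now performed relative to a general commutative $S$-algebra $R$ rather than just $R = S$. This is precisely the generalization flagged in the introduction: the argument for $R = S$ is essentially that of \cite[\S5]{Pereira2014}, and the extension amounts to verifying that smashing with a cofibrant $R$-module over $R$ preserves the relevant positive-cofibrancy and $\Sigma_n$-equivariance properties. I expect the technical core of the proof to be deferred to \secref{deferred proofs sec} together with the appendix, and to consist of this transfer from $S\modd$ to $R\modd$ rather than of any essentially new combinatorics.
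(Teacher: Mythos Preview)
Your approach is correct in substance and rests on the same underlying machinery as the paper, but the packaging differs. The paper does not run the cell induction and arity filtration directly for this proposition; instead it isolates a general pushout-product statement (\thmref{CIRCO POS R THM} and its \corref{CIRCO POS THM}): if $f_1:M\to N$ is an underlying positive cofibration of right $\Oo$--modules and $f_2:I\to J$ is a cofibration between cofibrant objects in $\AlgO(R)$, then $f_1\square^{\circ_{\Oo}}f_2$ is a positive cofibration in $R\modd$. Taking $f_1$ to be $*\to\Oo$ (a positive cofibration by Assumption~\ref{COFIB ASSUM}), the pushout circle product collapses to $f_2$ itself, viewed in $R\modd$, and the proposition drops out in three lines. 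The filtration work you describe is then exactly what goes into the appendix proof of \thmref{CIRCO POS R THM}, so you are effectively inlining that proof in the special case $M=*$, $N=\Oo$. What the paper's abstraction buys is reuse: the same corollary also feeds the proofs of Propositions~\ref{BARCOF PROP} and~\ref{BARDEREQ PROP}.

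One slip to correct: you write that $U$ ``preserves retracts, pushouts, and transfinite composites.'' The forgetful functor does \emph{not} preserve pushouts---indeed, the whole point of the arity filtration is to analyze $U$ applied to an algebra pushout that is not a module pushout. Your argument does not actually use preservation of pushouts: the inductive step is your assertion~($\ast$), which only asks that $U$ of the cobase change map be a cofibration, and the passage to transfinite composites uses only that $U$ preserves filtered colimits. So the logic is sound once that clause is deleted. Also, in your filtration square the object playing the role of ``$\Oo(n)$ decorated by $UA$'' is the enveloping symmetric sequence $\Oo_A(n)$ (in the paper's notation $N_A$ with $N=\Oo$), and part of the induction is precisely to establish that this is levelwise cofibrant; your parenthetical ``encoding the prior attaching data'' gestures at this, but it is the nontrivial nested induction in the appendix.
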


When $R=S$ this is \cite[Theorem 1.5]{Pereira2014}. We defer the proof of the general case to \secref{deferred proofs sec}.

It follows that a functorial cofibrant replacement functor on $\AlgO(R)$
takes values in $\AlgO(R)^c$.

More elementary, but also useful is that $\AlgO(R)^{c}$ is well behaved under change of rings.

\begin{lem} Let $R \ra R^{\prime}$ be a map of commutative $S$--algebras. Then
$$R'\wedge_R \underline{\text{\hspace{.13in}}}: \AlgO(R) \to \AlgO(R^{\prime})$$
 restricts to a functor
$$ R'\wedge_R \underline{\text{\hspace{.13in}}}: \AlgO(R)^{c} \to \AlgO(R^{\prime})^c$$
which preserves weak equivalences.
\end{lem}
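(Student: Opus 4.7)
The plan is to exhibit $R'\wedge_R -$ as the left adjoint in a Quillen adjunction $R\modd \rightleftarrows R'\modd$ and then invoke Ken Brown's lemma. First I would identify the right adjoint as the restriction functor $U: R'\modd \to R\modd$ along $R \to R'$. Because the positive model structures on both $R\modd$ and $R'\modd$ are, by construction, the projective structures lifted from the positive structure on $S\modd$, and because $U$ leaves the underlying $S$-module unchanged, $U$ carries positive fibrations to positive fibrations and trivial fibrations to trivial fibrations. Hence $(R'\wedge_R -, U)$ is a Quillen pair, and $R'\wedge_R -$ is left Quillen for the positive structures.

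Next, for $I \in \AlgO(R)^c$, cofibrancy as an $R$-module is preserved by any left Quillen functor, so $R'\wedge_R I$ is cofibrant in $R'\modd$. To see that $R'\wedge_R I$ carries a genuine $\Oo$-algebra structure, I would use that $R'\wedge_R -$ is strong symmetric monoidal, via the canonical isomorphism $R'\wedge_R (X \wedge_R Y) \cong (R'\wedge_R X) \wedge_{R'} (R'\wedge_R Y)$, combined with the fact that $\Oo$ is a symmetric sequence in $S\modd$ whose structure maps are insensitive to the choice of base ring. Applying $R'\wedge_R -$ to the structure maps $\Oo(n) \wedge_{\Sigma_n} I^{\wedge_R n} \to I$ and using the monoidal isomorphism yields maps $\Oo(n) \wedge_{\Sigma_n} (R'\wedge_R I)^{\wedge_{R'} n} \to R'\wedge_R I$ which assemble into an $\Oo$-algebra structure over $R'$. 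Therefore $R'\wedge_R I \in \AlgO(R')^c$.

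Finally, preservation of weak equivalences follows from Ken Brown's lemma. A weak equivalence $f: I \to J$ in $\AlgO(R)^c$ is, by definition, a weak equivalence of underlying $R$-modules, with source and target cofibrant in $R\modd$. Since $R'\wedge_R -$ is left Quillen, Ken Brown's lemma gives that $R'\wedge_R f$ is a weak equivalence in $R'\modd$, hence in $\AlgO(R')$. The argument is essentially formal; the only point requiring any real care is the bookkeeping that base change is compatible with the $\Oo$-action, which ultimately rests on $\Oo$ being defined in $S\modd$ independently of the base ring and on $R'\wedge_R -$ being strong symmetric monoidal.
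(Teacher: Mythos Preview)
Your argument is correct and is essentially the same approach as the paper's: the paper's one-line proof simply observes that $R'\wedge_R -$ is left adjoint to a forgetful functor that is easily checked to be right Quillen, which is exactly the Quillen pair $(R'\wedge_R -,\, U)$ at the module level that you set up. You have merely expanded the details the paper leaves implicit, including the strong monoidality check (which the paper takes for granted in positing $R'\wedge_R -$ as a functor $\AlgO(R)\to\AlgO(R')$) and the explicit appeal to Ken Brown's lemma.
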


\begin{proof}
This is immediate since $R'\wedge_R \underline{\text{\hspace{.13in}}}$ is left adjoint to a forgeful functor that is easily seen to be right Quillen.
\end{proof}

\subsection{General properties of the bar construction} \label{general results}

We will make much use of the bar construction.  Given an $\Oo$--bimodule $M$ and $I \in \AlgO(R)$, $B(M,\Oo,I) \in \AlgO(R)$ is defined as the geometric realization of the simplicial object $B_{\bullet}(M,\Oo,I)$ in $R\modd$ defined by
$$ B_n(M,\Oo,I) =  M \circ \overbrace{\Oo \circ \cdots \circ \Oo}^n \circ I.$$

Similarly if $M$ and $N$ are $\Oo$--bimodules, then $B(M,\Oo,N)$ is again an $\Oo$-bimodule.

The theme of the next set of results is that this construction is well behaved when the $\Oo$--bimodules are positive cofibrant in $\Sym(S)$, and $I \in \AlgO(R)$ is cofibrant in $R\modd$.  (We recall that a reduced $M \in \Sym(S)$ is positively cofibrant exactly when it is levelwise cofibrant.)

\begin{prop}\label{BARCOF PROP}
Let $M$ and $N$ be levelwise cofibrant $\Oo$--bimodules.  Then $B(M\,\Oo,N)$ is again levelwise cofibrant.  Similarly, if $M$ is levelwise cofibrant and $I \in \AlgO(R)^c$, then $B(M,\Oo,I) \in \AlgO(R)^c$.
\end{prop}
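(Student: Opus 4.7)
The plan is to view $B(M,\Oo,N)$ as the geometric realization of a simplicial symmetric sequence that is sufficiently cofibrant in two complementary senses: each simplicial degree is levelwise cofibrant, and the simplicial structure is Reedy cofibrant, so that realization preserves the cofibrancy. The same framework handles both assertions once we substitute $I$ for $N$ and track cofibrancy in $R\modd$ instead of levelwise in $\Sym(S)$.

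First I would verify that each simplicial level $B_n(M,\Oo,N) = M \circ \Oo^{\circ n} \circ N$ is levelwise cofibrant. Using the explicit wedge/smash formula \eqref{COMPPROD EQ}, the symmetric sequence $B_n(M,\Oo,N)(s)$ is built out of iterated wedges over $\Sigma_r$ of smash products of the entries $M(k)$, $\Oo(k)$, $N(k)$. By Assumption \ref{COFIB ASSUM}, each $\Oo(k)$ is cofibrant in $S\modd$, and combined with the assumed levelwise cofibrancy of $M$ and $N$, the pushout-product property of the (positive) $S$-model structure ensures each term is cofibrant, whence so is the wedge. The algebra variant is identical, substituting $I \in \AlgO(R)^c$ for $N$ and replacing $\sm$ by $\sm_R$ where appropriate; cofibrancy of $I$ in $R\modd$ keeps every smash factor cofibrant.

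Next I would show that $B_\bullet(M,\Oo,N)$ is Reedy cofibrant in the levelwise structure on $\Sym(S)$. Its degeneracies are induced by the unit map $S(1) \hra \Oo$, which is a positive cofibration by Assumption \ref{COFIB ASSUM}. The latching map $L_n B_\bullet \hra B_n$ is then obtained as an iterated pushout-product of copies of $S(1) \hra \Oo$ sandwiched on the outside by $M$ and $N$ and on the inside by the remaining $\Oo$-factors; the monoidal pushout-product property (applied one layer at a time, respecting the $\Sigma_r$-equivariance in \eqref{COMPPROD EQ}) gives that it is a levelwise cofibration. Once this is established, the standard fact that the geometric realization of a Reedy cofibrant simplicial object in the levelwise structure is again levelwise cofibrant yields that $|B_\bullet(M,\Oo,N)| = B(M,\Oo,N)$ is levelwise cofibrant, and analogously $B(M,\Oo,I)$ is cofibrant in $R\modd$.

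The main obstacle will be the second step: the latching object $L_n$ is the subobject of $B_n$ consisting of those wedge summands in which at least one of the $n$ interior $\Oo$-factors arises via the unit $S(1) \hra \Oo$. Packaging this combinatorial description as an iterated pushout-product, while keeping track of the $\Sigma_r$-actions built into the composition product, is the technical heart of the proof. Once the pushout-product identification is in hand, Assumption \ref{COFIB ASSUM} and the monoidal structure of the positive model structures on $S\modd$ and $R\modd$ close the argument uniformly for both claims.
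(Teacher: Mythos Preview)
Your outline is essentially the strategy the paper adopts: it cites \cite[Thm.~1.6]{Pereira2014}, which proves exactly that $B_\bullet(M,\Oo,N)$ is Reedy cofibrant in $\Sym(S)^{\Delta^{op}}$ via the latching--map analysis you describe, and then remarks that for the algebra statement the same proof goes through once one replaces the final appeal to \cite[Thm.~1.1]{Pereira2014} by Corollary~\ref{CIRCO POS THM}.

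There is, however, one point where your sketch is too optimistic and risks hiding the actual content. You write that ``the monoidal pushout--product property'' of the positive structures on $S\modd$ and $R\modd$ closes the argument. But the latching map is a pushout--product for $\circ$, not for $\sm$, and $(\Sym,\circ)$ is \emph{not} a monoidal model category in the usual sense: the formula \eqref{COMPPROD EQ} involves $\Sigma_r$--orbit functors $X(r)\sm_{\Sigma_r}(-)$, and these do not preserve cofibrations for free. The reason the argument works is precisely the special feature of the \emph{positive} model structure, namely that maps of the form $X(r)\sm_{\Sigma_r}(g^{\square r})$ are cofibrations when $g$ is a positive cofibration and $X(r)$ is cofibrant (\cite[Thms.~1.2, 1.3]{Pereira2014} and their bisymmetric analogues). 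This is the genuine technical input, not the ordinary pushout--product axiom for $\sm$. For the $R$--module statement the analogous ingredient is Corollary~\ref{CIRCO POS THM} (equivalently Theorem~\ref{CIRCO POS R THM}), whose proof occupies the appendix; your claim that the two cases close ``uniformly'' from the monoidal axioms alone would, if taken literally, bypass that entire argument. Once you name and invoke these $\Sigma_r$--equivariant cofibrancy results explicitly, your plan is correct and coincides with the paper's.
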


The first statement is immediately implied by \cite[Theorem 1.6]{Pereira2014} which says that $B_{\bullet}(M,\Oo,N)$ is Reedy cofibrant in the category of simplicial objects in $\Sym(S)$.  We defer the proof of the second statement for general $R$ to \secref{deferred proofs sec}.

We also record the following, which shows that the bar construction can be usefully used as a derived circle product.

\begin{prop}\label{BARDEREQ PROP} Let $M$ be a levelwise cofibrant right $\Oo$--module.
If $I$ is cofibrant in $\AlgO(R)$, the natural map $B(M,\Oo,I) \ra M \co I$ is a weak equivalence.  Similarly if $N$ is cofibrant in $\Mod^l_{\Oo}(S)$, then $B(M,\Oo,N) \ra M \co N$ is a weak equivalence.
\end{prop}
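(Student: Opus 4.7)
The plan is to reduce both statements to the free case, which is handled by the classical extra--degeneracy argument.

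For the free case, consider $I = \Oo \circ X$ with $X$ a cofibrant $R$--module; this is the free $\Oo$--algebra on $X$. Its left $\Oo$--module structure comes from operad multiplication $\Oo \circ \Oo \ra \Oo$, and this allows one to insert an extra unit $S(1) \ra \Oo$ between the right--most $\Oo$ in $B_n(M,\Oo,\Oo \circ X)$ and the left--most $\Oo$ in $\Oo \circ X$, producing extra degeneracies that exhibit the augmented simplicial object
$$ B_\bullet(M,\Oo,\Oo \circ X) \lra M \co (\Oo \circ X) $$
as simplicially contractible. Taking realizations in $\Sym(R)$, the comparison map $B(M,\Oo,\Oo \circ X) \ra M \co (\Oo \circ X)$ is then a simplicial homotopy equivalence, hence in particular a weak equivalence. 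The same argument with $X$ replaced by a cofibrant $Y \in \Sym(S)$ settles the free left $\Oo$--module $N = \Oo \circ Y$.

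To extend to an arbitrary cofibrant $I \in \AlgO(R)$, we use a cell induction. Such an $I$ is a retract of a cell $\Oo$--algebra, built as a transfinite composition of pushouts along maps of the form $\Oo \circ A \ra \Oo \circ B$ with $A \ra B$ a generating positive cofibration in $R\modd$. Because $M$ is levelwise cofibrant and $\Oo$ satisfies Assumption~\ref{COFIB ASSUM}, the functor $B(M,\Oo,-)$ preserves weak equivalences between objects of $\AlgO(R)^{c}$ (by \propref{BARCOF PROP} together with the Reedy cofibrancy of $B_\bullet(M,\Oo,-)$) and commutes up to weak equivalence with such pushouts and transfinite compositions; an analogous analysis applies to $M \co -$. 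The natural transformation $B(M,\Oo,-) \Rightarrow M \co -$ is an equivalence on free algebras and compatible with each cell attachment, so by induction it is an equivalence on every cofibrant $I$. The statement for $N$ cofibrant in $\Mod^l_{\Oo}(S)$ is proved identically, this time filtering $N$ by cell attachments of free left $\Oo$--modules.

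The main technical obstacle is the cell--induction step itself: one must verify that $M \co -$ (and, independently, $B(M,\Oo,-)$) sends a cell attachment $\Oo \circ A \ra \Oo \circ B$ to a well--behaved cofibration of $R$--modules, and interacts correctly with the relevant filtered colimits. This is the $R$--linear analogue of the ``circle--product filtration'' machinery from \cite{Pereira2014}; its verification, along with those of \propref{COFFGT PROP} and \propref{BARCOF PROP}, is naturally deferred to \secref{deferred proofs sec}.
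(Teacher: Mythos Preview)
Your approach is a reasonable alternative strategy, but it differs substantially from the paper's, and the paper's route is considerably more economical.

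The paper does not do a cell induction at all. Instead it uses the identification
\[
B(M,\Oo,I)\;\simeq\; M\co B(\Oo,\Oo,I),
\]
so that the comparison map $B(M,\Oo,I)\to M\co I$ is literally $M\co(\,B(\Oo,\Oo,I)\to I\,)$. The problem is thereby reduced to two independent facts: (i) $M\co\underline{\text{\hspace{.13in}}}$ preserves weak equivalences between cofibrant objects of $\AlgO(R)$, which follows from \corref{CIRCO POS THM} via Ken Brown's lemma; and (ii) $B(\Oo,\Oo,I)$ is itself cofibrant in $\AlgO(R)$, which is proved by observing that, forgetting face maps, $B_\bullet(\Oo,\Oo,I)\simeq \Oo\circ B_\bullet(S(1),\Oo,I)$, so the latching maps are $\Oo\circ(\text{cofibration in }R\modd)$ and hence cofibrations in $\AlgO(R)$. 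No induction over a cell structure of $I$ is needed.

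By contrast, your argument needs that both $M\co-$ and $B(M,\Oo,-)$ take a cell attachment in $\AlgO(R)$ to a homotopy pushout in $R\modd$ (neither functor commutes with pushouts in $\AlgO(R)$ on the nose, since $\circ$ is not cocontinuous in its right variable), and this is precisely the content you defer. That deferred work is not lighter than what the paper defers; in fact it amounts to extracting the filtration of $M\co I\to M\co J$ from \cite[Prop.~5.20]{Pereira2014} and controlling its associated graded, for each cell, which is strictly more than the single invocation of the pushout--product theorem plus Ken Brown that the paper uses. Your extra--degeneracy step for free algebras is fine, but it is subsumed in the paper's argument by the standard fact that $B(\Oo,\Oo,I)\to I$ is a weak equivalence. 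In short: your outline is not wrong, but the paper's rewriting trick avoids the inductive bookkeeping entirely and is worth internalizing.
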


This will also be proved in \secref{deferred proofs sec}.

To emphasize the functors defined by levelwise cofibrant bimodules, we change notation.

\begin{defn}  If $M$ is a levelwise cofibrant $\Oo$--bimodule, define
$$ F^R_M: \AlgO(R)^c \ra \AlgO(R)^c$$
by the formula $F^R_M(I) = B(M,\Oo,I)$.
\end{defn}

\begin{thm} \label{NAT IN M THM}

The $F^R_{M}$ construction satisfies the following properties.

\noindent{\bf (a)} \ $(M,I)\mapsto F^R_{M}(I)$ takes weak equivalences in either the $M$ or $I$ variable to weak equivalences in $\AlgO(R)$.

\noindent{\bf (b)} \ A levelwise homotopy cofibration sequence of levelwise cofibrant $\Oo$--bimodules
     $$ L \ra M \ra N$$
     induces a homotopy fibration sequence in $\AlgO(R)$
     $$ F^R_{L}(I) \ra F^R_{M}(I) \ra F^R_{N}(I).$$

\noindent{\bf (c)} \ There is a natural isomorphism of functors:
$$F^R_M\circ F^R_N \simeq F^R_{B(M,\Oo, N)}.$$

\noindent{\bf (d)} \ Let $R \ra R^{\prime}$ be a map of commutative $S$--algebras.  There is a natural isomorphism in $\AlgO(R^{\prime})$:
$$F^{R^{\prime}}_M(R^{\prime} \sm_R I) \simeq R^{\prime} \sm_R F^R_M(I).$$
\end{thm}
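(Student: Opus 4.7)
The overall plan is to reduce each of (a)--(d) to a statement about the simplicial $R$-module $B_\bullet(M,\Oo,I)$ and then invoke general properties of geometric realization. The key enabling facts, all provided by the technical machinery of \secref{deferred proofs sec} and Appendix~\ref{app a} (the extension of \cite[Thm.~1.6]{Pereira2014} from $S$ to a general $R$), are: (i) that $B_\bullet(M,\Oo,I)$ is Reedy cofibrant in simplicial $R$-modules whenever $M$ is levelwise cofibrant and $I \in \AlgO(R)^c$; (ii) that geometric realization of Reedy cofibrant simplicial $R$-modules preserves levelwise weak equivalences and takes levelwise homotopy cofibration sequences to homotopy cofibration sequences; and (iii) that the circle product is additive in its first variable, as is manifest from the outer wedge over $r$ in (\ref{COMPPROD EQ}).

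For (a), a weak equivalence $M \ra M'$ of levelwise cofibrant bimodules induces at each simplicial degree a map $M \circ \Oo^n \circ I \ra M' \circ \Oo^n \circ I$. Iterated pushout-product arguments, using the levelwise cofibrancy of $M$, $M'$, $\Oo$, and $I$, show that this is a weak equivalence of levelwise cofibrant $R$-modules; since source and target are Reedy cofibrant, realization preserves the weak equivalence. The argument for a weak equivalence in $I$ within $\AlgO(R)^c$ is analogous. For (b), apply $- \circ \Oo^n \circ I$ to the hcofibration sequence $L \ra M \ra N$: additivity in the first variable, together with levelwise cofibrancy of $\Oo^n \circ I$, produces a levelwise hcofibration sequence of Reedy cofibrant simplicial $R$-modules, whose realization is an hcofibration sequence in $R\modd$. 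Since $R\modd$ is stable, this is simultaneously an hfibration sequence there, and because weak equivalences and fibrations in $\AlgO(R)$ are detected in $R\modd$, it is also an hfibration sequence in $\AlgO(R)$.

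For (c), introduce the bisimplicial $R$-module $B_{p,q} = M \circ \Oo^p \circ N \circ \Oo^q \circ I$. Realizing in the $q$-direction first yields the simplicial object $p \mapsto M \circ \Oo^p \circ B(N,\Oo,I)$, whose realization is $F^R_M(F^R_N(I))$ (using \propref{BARCOF PROP} to know $B(N,\Oo,I)$ is of the right type to feed into $F^R_M$); realizing in the $p$-direction first yields $q \mapsto B(M,\Oo,N) \circ \Oo^q \circ I$, whose realization is $F^R_{B(M,\Oo,N)}(I)$. Reedy cofibrancy in both simplicial directions lets us identify each with the diagonal realization of $B_{\bullet,\bullet}$, giving the stated natural isomorphism. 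For (d), since $R' \sm_R -$ is a left adjoint, it commutes with both the wedges in (\ref{COMPPROD EQ}) and with geometric realization; combining this with the identity $R' \sm_R (A \sm_R B) \cong (R' \sm_R A) \sm_{R'} (R' \sm_R B)$ and the fact that $M$ and $\Oo$ live in $S\modd$ gives levelwise natural isomorphisms $R' \sm_R (M \circ \Oo^n \circ I) \cong M \circ \Oo^n \circ (R' \sm_R I)$ that assemble upon realization to (d).

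I expect part (b) to be the main conceptual obstacle, as it is the only statement that is not purely formal from the simplicial-level identities: it requires stability of $R\modd$, detection of weak equivalences and fibrations in $\AlgO(R)$ by $R\modd$, and a careful interpretation of what ``levelwise hcofibration sequence'' of bimodules really asserts at each operadic level. At the technical level, however, the actual hard work underlying all four parts is the $R$-module extension of the Reedy cofibrancy results of \cite{Pereira2014}, without which the simplicial-level arguments above cannot be lifted to the realizations.
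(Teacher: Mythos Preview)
Your treatment of (c) and (d) matches the paper's: both are dispatched immediately after the theorem statement via the isomorphisms $B(M,\Oo,B(N,\Oo,I)) \simeq B(B(M,\Oo,N),\Oo,I)$ and $R'\sm_R B(M,\Oo,I) \simeq B(M,\Oo,R'\sm_R I)$, which is exactly what your bisimplicial and left-adjoint arguments amount to.

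For (a) and (b) your route is genuinely different from the paper's. You argue levelwise on $B_\bullet(M,\Oo,I)$ and then invoke Reedy cofibrancy to pass to the realization. The paper instead rewrites $F^R_M(I) = M \circ_\Oo B(\Oo,\Oo,I)$, uses (from the proof of \propref{BARDEREQ PROP}) that $B(\Oo,\Oo,I)$ is \emph{cofibrant as an $\Oo$-algebra}, and then applies the pushout--circle--product result (\corref{CIRCO POS THM} / \cite[Thm.~1.1]{Pereira2014}) together with Ken Brown's lemma for the $M$ variable; for (b) it changes perspective to $S$--modules via the identifications $\Alg_{R\sm\Oo}\simeq\AlgO(R)$ and simply cites \cite[Thm.~1.8]{Pereira2014}. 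Your approach is more transparent and self-contained---it makes explicit the linearity-in-$M$ and the stability step that are presumably inside \cite[Thm.~1.8]{Pereira2014}---while the paper's is shorter because it leverages the algebra-cofibrancy of $B(\Oo,\Oo,I)$ to avoid reproving anything at the simplicial level. One caution: your phrase ``iterated pushout-product arguments'' for the $I$ variable glosses over the fact that $I$ occurs in $\Sigma_r$-symmetrized smash powers, so ordinary monoidal model category axioms are not enough; this is precisely where the positive-model-structure results underlying \thmref{CIRCO POS R THM} enter, and you should say so explicitly rather than leaving it implicit in ``the technical machinery of \secref{deferred proofs sec}.''
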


Parts (a) and (b) will be proved in \secref{deferred proofs sec}.  By contrast, parts (c) and (d) are straightforward.  Part (c) follows from the natural isomorphism
$$ B(M,\Oo,B(N,\Oo,I)) \simeq B(B(M,\Oo,N),\Oo,I),$$
while part (d) follows from the natural isomorphism
$$ R^{\prime} \sm_R B(M,\Oo,I) \simeq B(M,\Oo,R^{\prime} \sm_R I).$$

\begin{rem}  As there is a natural map $B(M,\Oo,N) \ra M\co N$, it follows that a bimodule pairing
$$ \mu: M \co N \ra L$$
induces a natural transformation
$$ \mu: F^R_M \circ F^R_N \ra F^R_L$$
defined as the composite
$$F^R_M \circ F^R_N \simeq F^R_{B(M,\Oo, N)} \ra F^R_{M\co N} \ra F^R_L.$$
See \secref{comp sec} for examples of this.
\end{rem}

\subsection{Topological Andr\'e--Quillen homology} \label{TQ section}

In the next two subsections, we consider our constructions when $M$ is concentrated in just one level, i.e., there exists an $n$ such that $M(m) = *$ for all $m \neq n$.  We show that then $F^R_M(I)$ is determined by $M(n)$ together with the Topological Andr\'e--Quillen homology of $I$.

We first need to define this last last term in our context.  The $S$--module $\Oo(1)$ will be an associative $S$--algebra, and can be viewed as an operad concentrated in level 1.  From this point of view, the evident maps $\Oo(1) \ra \Oo$ and $\Oo \ra \Oo(1)$ are both maps of operads, and the second of these gives $\Oo(1)$ the structure of an $\Oo$--bimodule concentrated in level 1.

Let $R\Oo(1)\modd$ be the category of $R \sm \Oo(1)$--modules. It is illuminating to note that this category is also $\Alg_{\Oo(1)}(R)$, when one views $\Oo(1)$ as an operad.  The map $\Oo \ra \Oo(1)$ induces a functor
$$ z: R\Oo(1)\modd \ra \AlgO(R)$$
with left adjoint
$$ Q = \Oo(1) \co \underline{\text{\hspace{.13in}}}: \AlgO(R) \ra R\Oo(1)\modd$$
making the pair of functors into a Quillen pair.

\begin{defn}  Define $ TQ: \AlgO(R)^c \ra R\Oo(1)\modd$ by the formula
$TQ(I) = B(\Oo(1),\Oo,I)$.
\end{defn}

The next proposition is a special case of \propref{BARDEREQ PROP}.

\begin{prop}  If $I$ is cofibrant in $\AlgO(R)$, the natural map $TQ(I) \ra Q(I)$ is an equivalence.
\end{prop}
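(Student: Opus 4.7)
The plan is to recognize this statement as a direct instantiation of \propref{BARDEREQ PROP}. Unpacking definitions, $TQ(I) = B(\Oo(1),\Oo,I)$ by construction, and $Q(I) = \Oo(1) \co I$ by definition of the left adjoint $Q$. The natural map $TQ(I) \to Q(I)$ is the usual augmentation of the bar simplicial object, i.e.\ the canonical map $B(M,\Oo,I) \to M \co I$ appearing in \propref{BARDEREQ PROP}, specialized to $M = \Oo(1)$ viewed as an $\Oo$-bimodule concentrated in level $1$.

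Thus the only nontrivial step is to verify that $\Oo(1)$, regarded as a right $\Oo$-module, satisfies the cofibrancy hypothesis of \propref{BARDEREQ PROP}, namely that it is levelwise cofibrant. Since $\Oo(1)$ is concentrated in a single level (with $M(0)=*$ and $M(m)=*$ for $m\neq 1$), levelwise cofibrancy reduces to the statement that $\Oo(1)$ is cofibrant as an object of $S\modd$. But this is precisely part of Assumption \ref{COFIB ASSUM}: the requirement that $S(1) \ra \Oo$ be a positive cofibration in $\Sym(S)$ is unpacked in the text as saying $S \ra \Oo(1)$ is a cofibration in $S\modd$ and $\Oo(n)$ is cofibrant for all $n$. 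In particular $\Oo(1)$ is cofibrant.

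With both hypotheses of \propref{BARDEREQ PROP} verified ($M = \Oo(1)$ levelwise cofibrant as a right $\Oo$-module, $I$ cofibrant in $\AlgO(R)$), that proposition immediately gives the desired weak equivalence. I do not anticipate any obstacles here: the entire content of the proposition is packaging together the definitions of $TQ$ and $Q$, and then invoking the stated general fact. The only conceptual work is being careful about the two roles $\Oo(1)$ plays: as an associative $S$-algebra (so that $Q(I)$ lands in $R\Oo(1)\modd$) and as an $\Oo$-bimodule concentrated in level $1$ (so that the bar construction $B(\Oo(1),\Oo,I)$ makes sense), but these are compatible precisely because of the operad map $\Oo \to \Oo(1)$ noted just before the definition of $TQ$.
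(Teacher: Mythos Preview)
Your proposal is correct and matches the paper's approach exactly: the paper simply states that this proposition is a special case of \propref{BARDEREQ PROP}, and you have correctly identified the specialization $M = \Oo(1)$ and verified the needed levelwise cofibrancy hypothesis via Assumption~\ref{COFIB ASSUM}.
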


As $TQ$ is thus equivalent to the left derived functor of the left Quillen functor $Q$, one has the next two consequences.

To state the first, we let $[I,J]_{\Alg}$ denote morphisms between $I$ and $J$ in the homotopy category of $\AlgO(R)$, and we similarly let $[M,N]_{\Mod}$ denote morphisms between $M$ and $N$ in the homotopy category of $R\Oo(1)\modd$.

\begin{cor} \label{TQ homotopy cor} There is an adjunction in the associated homotopy categories:
$$ [TQ(I), N]_{\Mod} \simeq [I,z(N)]_{\Alg}.$$
\end{cor}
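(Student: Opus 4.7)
The plan is to derive this adjunction from the Quillen pair $(Q, z)$ established earlier, combined with the identification of $TQ$ with the left derived functor $\mathbb{L}Q$ supplied by the preceding proposition. Since the adjunction is already in place and $TQ$ has just been packaged as a well behaved model for the derived $Q$, the corollary should fall out formally.

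First, since $(Q, z)$ is a Quillen adjunction, standard derived-adjunction machinery gives a natural bijection in the homotopy categories
$$ [\mathbb{L}Q(I), N]_{\Mod} \simeq [I, \mathbb{R}z(N)]_{\Alg}. $$
Next, I would verify that $\mathbb{R}z \simeq z$: this holds because $z$ preserves all weak equivalences. Indeed, the underlying $R$--module of $z(N)$ equals that of $N$ (the operad map $\Oo \ra \Oo(1)$ concentrates the induced $\Oo$-action in level $1$), and weak equivalences in both $R\Oo(1)\modd$ and $\AlgO(R)$ are detected on underlying $R$--modules. So the right-hand side becomes $[I, z(N)]_{\Alg}$ with no fibrant-replacement caveat.

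Finally, I would identify $\mathbb{L}Q(I)$ with $TQ(I)$. Choose a cofibrant replacement $I^c \xra{\sim} I$ in $\AlgO(R)$; by \propref{COFFGT PROP}, $I^c$ is $R$-cofibrant, hence $I^c \in \AlgO(R)^c$ and $TQ(I^c) = B(\Oo(1), \Oo, I^c)$ is defined. The proposition immediately preceding this corollary then provides a natural weak equivalence $TQ(I^c) \ra Q(I^c) = \mathbb{L}Q(I)$. Combined with \thmref{NAT IN M THM}(a) applied to the bimodule $M = \Oo(1)$ --- which guarantees that $TQ$ sends weak equivalences between $R$-cofibrant $\Oo$--algebras to weak equivalences --- this shows that $TQ(I)$, interpreted as $TQ(I^c)$ in the homotopy category, is canonically identified with $\mathbb{L}Q(I)$. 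Substituting the two identifications into the derived adjunction yields the stated formula. No genuinely new obstacle arises inside this corollary; the only non-routine input is the bar-vs-circle equivalence from the preceding proposition, whose proof is deferred to \secref{deferred proofs sec}.
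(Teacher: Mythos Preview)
Your proposal is correct and follows exactly the approach the paper intends: the paper gives no explicit proof, merely noting that since $TQ$ is equivalent to the left derived functor of the left Quillen functor $Q$, the corollary is an immediate consequence of the derived adjunction for the Quillen pair $(Q,z)$. Your write-up simply unpacks this remark, correctly observing that $z$ preserves all weak equivalences (so $\mathbb{R}z \simeq z$) and that $TQ$ computes $\mathbb{L}Q$ via the preceding proposition.
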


\begin{cor} \label{linear cor} If $I \ra J \ra K$ is a homotopy cofibration sequence in $\AlgO(R)$, then
$$TQ(I) \ra TQ(J)\ra TQ(K)$$
is a homotopy cofibration sequence in $R\Oo(1)\modd$.
\end{cor}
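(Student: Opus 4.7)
The plan is to recognize $TQ$ as (a point-set model for) the left derived functor of the left Quillen functor $Q = \Oo(1)\co\underline{\text{\hspace{.13in}}}:\AlgO(R)\to R\Oo(1)\modd$, and then to invoke the general fact that left Quillen functors between pointed model categories carry homotopy cofibration sequences to homotopy cofibration sequences. Since $R\Oo(1)\modd$ is the category of modules over the $S$--algebra $R\sm\Oo(1)$, it is stable, and homotopy cofibration sequences there agree with cofiber sequences of cofibrations between cofibrant objects.

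The first step is to replace the given homotopy cofibration sequence $I\ra J\ra K$ in $\AlgO(R)$ by a weakly equivalent one of the form $\tilde I\hra \tilde J\ra \tilde J\cup_{\tilde I}\ast$, where $\tilde I\hra \tilde J$ is a cofibration between cofibrant objects in $\AlgO(R)$ (hence, by \propref{COFFGT PROP}, in $\AlgO(R)^c$), and where $\tilde J\cup_{\tilde I}\ast\simeq K$ because $I\ra J\ra K$ was a homotopy cofibration sequence to begin with. This is standard model-category bookkeeping (cofibrant replacement followed by factorization of $\tilde I\ra \tilde J$).

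Next I apply $Q$. As a left adjoint, $Q$ preserves pushouts and initial objects, so $Q(\tilde J\cup_{\tilde I}\ast)\cong Q(\tilde J)\cup_{Q(\tilde I)}\ast$. As a left Quillen functor, $Q$ sends $\tilde I\hra \tilde J$ to a cofibration between cofibrant objects in $R\Oo(1)\modd$. Hence $Q(\tilde I)\ra Q(\tilde J)\ra Q(\tilde J\cup_{\tilde I}\ast)$ is a cofiber sequence of cofibrations between cofibrant objects in the stable category $R\Oo(1)\modd$, i.e. a homotopy cofibration sequence.

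Finally, the proposition immediately preceding the corollary identifies $TQ$ with $Q$ up to natural weak equivalence on cofibrant objects, and \thmref{NAT IN M THM}(a) (applied to $M=\Oo(1)$, which is levelwise cofibrant by Assumption~\ref{COFIB ASSUM}) shows that $TQ$ preserves weak equivalences between objects of $\AlgO(R)^c$. Transporting along the weak equivalences $\tilde I\simeq I$, $\tilde J\simeq J$, and $\tilde J\cup_{\tilde I}\ast\simeq K$ (after cofibrant replacement in $\AlgO(R)^c$), we conclude that $TQ(I)\ra TQ(J)\ra TQ(K)$ is a homotopy cofibration sequence in $R\Oo(1)\modd$. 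There is no serious obstacle here; the only mild care required is in arranging the cofibrant model of the cofibration sequence so that everything lands in $\AlgO(R)^c$, where the comparison $TQ\simeq Q$ is available.
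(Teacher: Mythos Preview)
Your proposal is correct and follows exactly the approach the paper takes: the paper simply records that $TQ$ is equivalent to the left derived functor of the left Quillen functor $Q$ and states this corollary as an immediate consequence, without further elaboration. Your write-up just spells out the standard model-category bookkeeping behind that one-line deduction.
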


The next result is a particular instance of \thmref{NAT IN M THM}(d).

\begin{prop} \label{TQ change of R prop} Let $R \ra R^{\prime}$ be a map of commutative $S$--algebras.  There is a natural isomorphism
$$TQ(R^{\prime} \sm_R I) \simeq R^{\prime} \sm_R TQ(I).$$
\end{prop}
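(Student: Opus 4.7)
The plan is to deduce this directly from \thmref{NAT IN M THM}(d) by specializing the bimodule variable to $M = \Oo(1)$. First I would check that $\Oo(1)$, viewed as an $\Oo$--bimodule concentrated in level $1$ via the augmentation $\Oo \ra \Oo(1)$, is indeed levelwise cofibrant in $\Sym(S)$: its only nontrivial level is $\Oo(1)$ itself, and Assumption~\ref{COFIB ASSUM} (that $S(1) \ra \Oo$ is a positive cofibration) guarantees in particular that $S \ra \Oo(1)$ is a cofibration in $S\modd$, so $\Oo(1)$ is cofibrant. Hence the functor $F^R_{\Oo(1)}$ is defined, and the definition $TQ(I) = B(\Oo(1),\Oo,I)$ together with the definition of $F^R_M$ give the tautological identification $F^R_{\Oo(1)} = TQ$ on $\AlgO(R)^c$, and similarly $F^{R'}_{\Oo(1)} = TQ$ on $\AlgO(R')^c$.

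With these identifications in hand, the claim is just the instance $M = \Oo(1)$ of part~(d) of \thmref{NAT IN M THM}, which we are free to assume. Concretely, substituting $M = \Oo(1)$ into the isomorphism
\begin{equation*}
F^{R'}_M(R' \sm_R I) \simeq R' \sm_R F^R_M(I)
\end{equation*}
yields $TQ(R' \sm_R I) \simeq R' \sm_R TQ(I)$, which is exactly the desired natural isomorphism.

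If I were to spell out why \thmref{NAT IN M THM}(d) specializes this way, the heart of the matter is the levelwise isomorphism
\begin{equation*}
R' \sm_R \big( \Oo(1) \circ \overbrace{\Oo \circ \cdots \circ \Oo}^{n} \circ I \big) \;\simeq\; \Oo(1) \circ \overbrace{\Oo \circ \cdots \circ \Oo}^{n} \circ (R' \sm_R I),
\end{equation*}
which comes from the fact that $R' \sm_R (-)$ is a symmetric monoidal left adjoint on $R$--modules, hence commutes with the smash products appearing in the definition of $\circ$. These isomorphisms assemble into an isomorphism of simplicial $R'$--modules $R' \sm_R B_\bullet(\Oo(1),\Oo,I) \simeq B_\bullet(\Oo(1),\Oo,R'\sm_R I)$, and since $R' \sm_R (-)$ commutes with geometric realization as a left adjoint, passage to realizations gives the required isomorphism. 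The only thing to watch is naturality, which is automatic from the functorial construction of the bar complex. I do not foresee a genuine obstacle here: once \thmref{NAT IN M THM}(d) is available and $\Oo(1)$ is known to be a levelwise cofibrant bimodule, the proposition is a one-line specialization.
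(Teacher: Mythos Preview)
Your proposal is correct and matches the paper's own argument exactly: the paper simply remarks that the proposition is a particular instance of \thmref{NAT IN M THM}(d), and you have spelled out precisely that specialization (with the added care of verifying that $\Oo(1)$ is levelwise cofibrant via Assumption~\ref{COFIB ASSUM}).
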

\noindent The first `$TQ$' here is with respect to the $S$--algebra $R^{\prime}$.

\subsection{$\Oo$--bimodules with one term} \label{one term  section}  Again we view $\Oo(1)$ as an operad concentrated in level 1.

Suppose $M \in \Sym(S)$ is a right $\Oo(1)$--module, i.e., one has $M \circ \Oo(1) \ra M$ making appropriate diagrams commute.  Unraveling the definitions, one sees that this structure map amounts to $\Sigma_n$--equivariant maps
$$ M(n) \sm \Oo(1)^{\sm n} \ra M(n)$$
exhibiting $M(n)$ as an $\Oo(1)^{\sm n}$--module.  Equivalently, each $M(n)$ will be a right $\Sigma_n\wr \Oo(1)$--module, where $\Sigma_n\wr \Oo(1)$ is the associative algebra with underlying $S$--module $\bigvee_{\sigma \in \Sigma_n} \Oo(1)^{\sm n}$, and evident `twisted' multiplication.

From this, it is easy to see that if $J \in \Alg_{\Oo(1)}(R) = R\Oo(1)\modd$, then
$$ M \circ_{\Oo(1)}J = \bigvee_n M(n) \sm_{\Sigma_n\wr \Oo(1)}J^{\sm_R n}.$$

Now suppose, given `$M(n)$', a left $\Oo(1)$--module that is also a right $\Sigma_n\wr \Oo(1)$--module.  Abusing notation, we will also write $M(n)$ for the symmetric sequence concentrated at level $n$:
$$M(n) = (*, \dots, *, M(n), *, \dots).$$
From this point of view, $M(n)$ is precisely an $\Oo(1)$--bimodule, where $\Oo(1)$ is viewed as an operad.  Furthermore, an $\Oo$--bimodule structure on $M(n)$ will necessarily be an $\Oo(1)$--bimodule structure pulled back via the map of operads $\Oo \ra \Oo(1)$.

\begin{thm} \label{ONE TERM THM}  Suppose $M(n)$ is also a cofibrant $S$--module.   Then, for $I \in \AlgO(R)^c$, there is a natural isomorphism
$$F^R_{M(n)}(I) = z(M(n) \sm_{\Sigma_n \wr \Oo(1)} TQ(I)^{\sm_R n}),$$
and a natural equivalence
$$z(B(M(n), \Oo(1), TQ(I))) \xra{\sim} F^R_{M(n)}(I).$$
\end{thm}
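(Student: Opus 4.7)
Both conclusions flow from the hypothesis that the $\Oo$-bimodule structure on $M(n)$ is pulled back along the operad map $\Oo \to \Oo(1)$, so that $M(n)$ is really an $\Oo(1)$-bimodule with $M(n) \circ_{\Oo(1)} \Oo(1) = M(n)$.  Combined with the formula $M(n) \circ_{\Oo(1)} J = M(n) \sm_{\Sigma_n \wr \Oo(1)} J^{\sm_R n}$ from \secref{one term  section}, this yields a canonical natural isomorphism
\[
M(n) \circ Z \;\cong\; M(n) \circ_{\Oo(1)} (\Oo(1) \circ Z)
\]
for any symmetric sequence $Z$ concentrated in level $0$: both sides evaluate to $M(n) \sm_{\Sigma_n} Z(0)^{\sm_R n}$ via the absorption identity $M(n) \sm_{\Sigma_n \wr \Oo(1)} (\Oo(1) \sm X)^{\sm n} \cong M(n) \sm_{\Sigma_n} X^{\sm n}$ for $X \in R\modd$.

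\textbf{Step 1 (the isomorphism).} Apply the above identification termwise to $Z = \Oo^{\circ k} \circ I$ to get an isomorphism of simplicial objects
\[
B_\bullet(M(n),\Oo,I) \;\cong\; M(n) \circ_{\Oo(1)} B_\bullet(\Oo(1),\Oo,I).
\]
Compatibility with all simplicial operators follows from the bimodule factorization: the outer face $d_0$ on the left, induced by the right $\Oo$-action on $M(n)$, matches $M(n) \circ_{\Oo(1)}(d_0)$ on the right because that right $\Oo$-action factors as $M(n) \circ \Oo \to M(n) \circ \Oo(1) \to M(n)$; interior faces and degeneracies are more direct. Since $\circ_{\Oo(1)}$ is a colimit in each variable and realization is symmetric monoidal with respect to $\sm$, taking $|\cdot|$ produces
\[
F^R_{M(n)}(I) \;\cong\; M(n) \circ_{\Oo(1)} TQ(I) \;=\; M(n) \sm_{\Sigma_n \wr \Oo(1)} TQ(I)^{\sm_R n},
\]
and inserting $z$ records the fact that the induced $\Oo$-algebra structure on the left factors through an $\Oo(1)$-algebra structure.

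\textbf{Step 2 (the equivalence).} Apply \propref{BARDEREQ PROP} to the operad $\Oo(1)$: the right $\Oo(1)$-module $M(n)$ is levelwise cofibrant (being cofibrant as an $S$-module at level $n$ and trivial elsewhere), and $TQ(I) = B(\Oo(1),\Oo,I)$ is cofibrant in $R\Oo(1)\modd = \Alg_{\Oo(1)}(R)$ because it is the realization of a Reedy cofibrant simplicial object whose levels $\Oo(1) \sm (\Oo^{\circ k} \circ I)(0)$ are free $\Oo(1)$-algebras on cofibrant $R$-modules (using the standing cofibrancy assumption on $\Oo$ together with $I \in \AlgO(R)^c$). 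The proposition then delivers a weak equivalence $B(M(n),\Oo(1),TQ(I)) \xra{\sim} M(n) \circ_{\Oo(1)} TQ(I)$. Applying $z$ (which preserves all weak equivalences, being the identity on underlying $R$-modules) and composing with the Step 1 isomorphism yields the claimed $z(B(M(n),\Oo(1),TQ(I))) \xra{\sim} F^R_{M(n)}(I)$.

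\textbf{Main obstacle.} The delicate bookkeeping is the simplicial compatibility in Step 1: one must verify that the face map $d_0$, which on the left uses the right $\Oo$-action on $M(n)$ and on the right uses the right $\Oo$-action on $\Oo(1)$ (composed through the relative product), are exchanged by the isomorphism, which is precisely where the hypothesis on $M(n)$'s bimodule structure is used. A secondary technical point is confirming cofibrancy of $TQ(I)$ as an $\Oo(1)$-algebra, which follows routinely from the Reedy cofibrancy of the bar construction combined with the cofibrancy assumption on $\Oo$.
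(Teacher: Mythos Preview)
Your Step 1 is essentially the paper's argument, just unpacked: the paper compresses your simplicial isomorphism into the single line $M(n)\circ_{\Oo(1)} B(\Oo(1),\Oo,I) = B(M(n),\Oo,I)$, using the unit identity $M(n)\circ_{\Oo(1)}(\Oo(1)\circ Z)\cong M(n)\circ Z$ levelwise.

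Your Step 2 is correct but takes a genuinely different route. The paper does \emph{not} invoke \propref{BARDEREQ PROP} for $\Oo(1)$ or check that $TQ(I)$ is cofibrant in $\Alg_{\Oo(1)}(R)$. Instead it uses the bar--associativity isomorphism
\[
B(M(n),\Oo(1),B(\Oo(1),\Oo,I)) \;=\; B\bigl(B(M(n),\Oo(1),\Oo(1)),\,\Oo,\,I\bigr),
\]
then applies \thmref{NAT IN M THM}(a) to the bimodule weak equivalence $B(M(n),\Oo(1),\Oo(1))\xra{\sim} M(n)$. This is cleaner: it stays entirely within results already proved and avoids the extra cofibrancy verification. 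Your approach also works, but note that your justification ``realization of a Reedy cofibrant simplicial object whose levels are free on cofibrant $R$--modules'' is not quite an argument for Reedy cofibrancy---you would need to imitate the latching--map computation from the proof of \propref{BARDEREQ PROP} (the degeneracy--only part of $B_\bullet(\Oo(1),\Oo,I)$ is $\Oo(1)\circ B_\bullet(S(1),\Oo,I)$, and one then transports the latching cofibrations across the left Quillen functor $\Oo(1)\circ\underline{\ \ }$). That does go through, so the gap is expository rather than mathematical.
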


\begin{proof}  We repress some applications of $z$, the pullback along $\Oo \ra \Oo(1)$.

Firstly, one has natural isomorphisms
\begin{equation*}
\begin{split}
M(n) \sm_{\Sigma_n \wr \Oo(1)} TQ(I)^{\sm_R n} &
= M(n) \sm_{\Sigma_n \wr \Oo(1)} B(\Oo(1),\Oo,I)^{\sm_R n} \\
  & = M(n) \circ_{\Oo(1)} B(\Oo(1),\Oo,I) \\
  & = B(M(n),\Oo,I) \\
  & = F^R_{M(n)}(I).
\end{split}
\end{equation*}

Secondly, the equivalence $B(M(n),\Oo(1),\Oo(1)) \xra{\sim} M(n)$ induces the equivalence:
\begin{equation*}
\begin{split}
B(M(n), \Oo(1), TQ(I)) &
= B(M(n), \Oo(1), B(\Oo(1),\Oo,I)) \\
  & = B(B(M(n), \Oo(1),\Oo(1)),\Oo,I) \\
  & \xra{\sim} B(M(n),\Oo,I) \\
  & = F^R_{M(n)}(I).
\end{split}
\end{equation*}

\end{proof}

\begin{cor}  \label{one term cor} Let $f: I \ra J$ be a morphism in $\AlgO(R)^c$. With $M(n)$ as in the theorem, if $TQ(f)$ is a weak equivalences, so is $F^R_{M(n)}(f)$.
\end{cor}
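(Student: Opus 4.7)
The plan is to reduce the claim to an instance of \thmref{NAT IN M THM}(a), applied with the sub-operad $\Oo(1)$ in place of $\Oo$.

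First, I would use the second natural equivalence in \thmref{ONE TERM THM},
$$ z(B(M(n),\Oo(1),TQ(I))) \xra{\sim} F^R_{M(n)}(I), $$
which is natural in $I$.  Applying it to $f$ produces a commutative square in $\AlgO(R)$ with both vertical arrows weak equivalences, so it suffices to show that $z(B(M(n),\Oo(1),TQ(f)))$ is a weak equivalence.  Since $z$ is pullback along the operad map $\Oo\ra\Oo(1)$ and so leaves underlying $R$-modules unchanged, and since weak equivalences in both $\AlgO(R)$ and $R\Oo(1)\modd$ are those which are weak equivalences in $R\modd$, the functor $z$ preserves all weak equivalences.  Thus the problem reduces to showing that $B(M(n),\Oo(1),TQ(f))$ is a weak equivalence in $\Alg_{\Oo(1)}(R) = R\Oo(1)\modd$.

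Next, I would invoke \thmref{NAT IN M THM}(a), now with $\Oo(1)$ in place of $\Oo$ and the $\Oo(1)$-bimodule $M(n)$ in place of $M$.  For this I must verify three things.  (i) The operad $\Oo(1)$ satisfies Assumption \ref{COFIB ASSUM}: the level-$1$ component of the hypothesis on $\Oo$ is precisely that $S\ra \Oo(1)$ is a cofibration in $S\modd$, and $\Oo(1)$ viewed as an operad is trivial at level $0$.  (ii) $M(n)$ is a levelwise cofibrant $\Oo(1)$-bimodule: it is concentrated at a single level and is cofibrant in $S\modd$ by hypothesis.  (iii) $TQ(I)$ and $TQ(J)$ lie in $\Alg_{\Oo(1)}(R)^c$: by \propref{BARCOF PROP}, since $\Oo(1)$ is levelwise cofibrant as an $\Oo$-bimodule and $I,J\in\AlgO(R)^c$, the bar constructions $B(\Oo(1),\Oo,I)$ and $B(\Oo(1),\Oo,J)$ are cofibrant as $R$-modules, and viewing them as $\Oo(1)$-algebras does not alter the underlying $R$-module.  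With these hypotheses in place, \thmref{NAT IN M THM}(a) yields the desired weak equivalence.

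The argument is essentially formal; the one point requiring attention is checking that the entire framework (model structures, cofibrancy assumptions, and the preservation result \thmref{NAT IN M THM}(a)) transfers cleanly from $\Oo$ to the sub-operad $\Oo(1)$.  Once that is confirmed, the conclusion is immediate from the representation of $F^R_{M(n)}$ in \thmref{ONE TERM THM}.
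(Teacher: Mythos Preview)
Your proposal is correct and is essentially the argument the paper leaves implicit: the corollary is stated without proof, as an immediate consequence of \thmref{ONE TERM THM}, and your write-up simply makes explicit the step of invoking \thmref{NAT IN M THM}(a) for the operad $\Oo(1)$ together with the checks that its hypotheses are met. The verifications you list (that $\Oo(1)$ satisfies Assumption~\ref{COFIB ASSUM}, that $M(n)$ is levelwise cofibrant as an $\Oo(1)$--bimodule, and that $TQ(I),TQ(J)\in\Alg_{\Oo(1)}(R)^c$ via \propref{BARCOF PROP}) are exactly what is needed and are all correct.
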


\subsection{The Goodwillie tower of $F^R_M$} \label{goodwillie tower sec}

The second author has studied Goodwillie calculus on the category $\AlgO(R)$ \cite{Pereira2013}. Here we sketch how our results above lead to an understanding of the Goodwillie tower of the functor $F^R_M$.

Given a levelwise cofibrant $\Oo$--bimodule $M$, let $M^{\leq n}$ denote the  $\Oo$--bimodule with
\begin{equation*}
M^{\leq n}(k) =
\begin{cases}
M(k) & \text{if } k\leq n \\ * & \text{if } k > n.
\end{cases}
\end{equation*}

\begin{defn}  Let $P_nF^R_M = F^R_{M^ {\leq n}}: \AlgO(R)^c \ra \AlgO(R)^c$.
\end{defn}

\begin{thm}  The Goodwillie tower of the functor $F^R_M$ identifies with
$$ P_1F^R_M \la P_2F^R_M \la P_3F^R_M \la \dots,$$
and its $n$th derivative $\partial_n F^R_M$ identifies with $M(n)$.
\end{thm}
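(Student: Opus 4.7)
The plan is to verify two things: each $F^R_{M^{\leq n}}$ is $n$--excisive with $n$--th derivative $M(n)$, and the canonical map $F^R_M \to F^R_{M^{\leq n}}$ is the universal $n$--excisive approximation.

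I would first produce the homogeneous layers. The short sequence $M(n) \to M^{\leq n} \to M^{\leq n-1}$, where $M(n)$ denotes the bimodule supported solely at level $n$, is a levelwise cofibration sequence (identity at level $n$, trivial cofibrations at lower levels), with all three terms levelwise cofibrant. Thus \thmref{NAT IN M THM}(b) yields a homotopy fibration sequence
\begin{equation*}
F^R_{M(n)} \lra F^R_{M^{\leq n}} \lra F^R_{M^{\leq n-1}}.
\end{equation*}
By \thmref{ONE TERM THM}, $F^R_{M(n)}(I) \simeq z\bigl(M(n) \sm_{\Sigma_n \wr \Oo(1)} TQ(I)^{\sm_R n}\bigr)$. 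Since $TQ$ is linear by \corref{linear cor}, the functor $I \mapsto TQ(I)^{\sm_R n}$ is $n$--homogeneous (as an $n$--fold smash of linear functors with identity coefficients), and smashing over $\Sigma_n \wr \Oo(1)$ with $M(n)$ followed by the right Quillen functor $z$ preserves homogeneity and identifies the $n$--th derivative with $M(n)$. An induction on $n$, using the fibration sequence together with the fact that an extension of an $(n-1)$--excisive functor by an $n$--homogeneous one is $n$--excisive, then shows $F^R_{M^{\leq n}}$ is $n$--excisive with $k$--th layer $F^R_{M(k)}$ for every $k \leq n$.

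To identify $F^R_{M^{\leq n}}$ with the Goodwillie approximation $P_n F^R_M$, I would apply \thmref{NAT IN M THM}(b) to the complementary levelwise cofibration sequence $M^{>n} \to M \to M^{\leq n}$, with $M^{>n}$ the bimodule obtained by deleting levels $\leq n$. Its homotopy fiber is $F^R_{M^{>n}}$, which by further truncation and iteration of the previous argument is assembled from homogeneous functors $F^R_{M(k)}$ of degree $k>n$, each annihilated by $P_n$. Hence $P_n F^R_{M^{>n}} \simeq *$, and the universal property of the Goodwillie tower gives $F^R_{M^{\leq n}} \simeq P_n F^R_M$, with $\partial_n F^R_M \simeq M(n)$ as already established.

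The main obstacle will be making the vanishing $P_n F^R_{M^{>n}} \simeq *$ precise when $M$ has infinitely many nonzero levels: one must show that $F^R_{M^{>n}}$ behaves, from the viewpoint of $P_n$, as the homotopy limit of the finite-stage approximations $F^R_{M^{>n,\, \leq m}}$ whose $P_n$ is visibly trivial. I expect this to follow by combining the connectivity estimates alluded to in \secref{general results} (so that $F^R_{M^{>m}}(I)$ becomes arbitrarily highly connected as $m \to \infty$ for connective $I$) with the general Goodwillie calculus framework on $\AlgO(R)$ from \cite{Pereira2013}.
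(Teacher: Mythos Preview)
Your argument follows the paper's sketch proof closely for the first half: the same cofibration sequence $M(n) \to M^{\leq n} \to M^{\leq n-1}$, the same appeal to \thmref{NAT IN M THM}(b) and \thmref{ONE TERM THM}, and the same use of \corref{linear cor} to see that the layers are homogeneous of the expected degree with derivative $M(n)$.

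Where you diverge is in the justification of the universal property. The paper does not argue directly that $P_n F^R_{M^{>n}} \simeq *$; instead it simply asserts that the proof of \cite[Theorem~4.3]{Pereira2013} generalizes to the present setting, and leaves it at that. Your route via the complementary sequence $M^{>n} \to M \to M^{\leq n}$ and the vanishing of $P_n$ on the fiber is a reasonable alternative, and you are right to flag the passage to infinitely many levels as the delicate point. Note, however, that your proposed fix through connectivity estimates only establishes the vanishing when restricted to connective inputs (and under connectivity hypotheses on $R$, $\Oo$, $M$), whereas the theorem is stated without such assumptions. To cover the general case you would still need to invoke the calculus machinery of \cite{Pereira2013}, for instance by checking that the cross-effects of $F^R_{M^{>n}}$ vanish through degree $n$; so in the end both approaches lean on that reference for the final step.
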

\begin{proof}[Sketch proof]  The sequence of $\Oo$-bimodules
$$ M(n) \ra M^{\leq n} \ra M^{\leq (n-1)}$$
satisfies the hypothesis of \thmref{NAT IN M THM}(b).  Thus the homotopy fiber of the map
$$ P_nF^R_M(I) \ra P_{n-1}F^R_M(I)$$
identifies as $F^R_{M(n)}(I)$, which \thmref{ONE TERM THM} tells us is
$$z(M(n) \sm_{\Sigma_n \wr \Oo(1)} TQ(I)^{\sm_R n}).$$
This is a homogeneous $n$--excisive functor: note that \corref{linear cor} first tells us that $TQ$ is a homogeneous linear functor.  (See \cite[Theorem 3.2]{Pereira2013} for more detail.)

It follows that $P_nF^R_M$ is $n$--excisive.  With a bit more care, one can now check that the natural transformation $F^R_M \ra P_nF^R_M$ identifies with the map from $F^R_{M}$ to its $n$--excisive quotient: the proof of \cite[Theorem 4.3]{Pereira2013} generalizes immediately to our setting.
\end{proof}

Under connectivity hypotheses, one gets very concrete convergence estimates.  Say that $X \in \Sym(S)$ is connective if each $X(n) \in S\modd$ is connective, i.e. $-1$--connected.

\begin{prop} \label{connectivity prop}  If $R$, $M$, and $\Oo$ are connective, and $I$ is $(c-1)$--connected, then the map $F^R_M(I) \ra P_nF^R_M(I)$ is $(n+1)c$--connected.
\end{prop}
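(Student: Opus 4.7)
The plan is to realize the map $F^R_M(I) \to P_nF^R_M(I)$ as the projection in a homotopy fibration sequence whose fiber is sufficiently highly connected, then filter further to reduce to bounds on the single-level layers. Let $M^{>n} \in \Sym(S)$ denote the symmetric sequence with $M^{>n}(k) = M(k)$ for $k > n$ and $*$ otherwise. Using that $\Oo$ is reduced, I would check that $M^{>n}$ inherits the structure of a sub-$\Oo$-bimodule of $M$: both operadic actions preserve ``output arity $> n$'' because every $\Oo(s_j)$ factor lives at arity $s_j \geq 1$, so the output arity of a right-action summand starting from $M^{>n}(r)$ with $r > n$ is $\sum_{j=1}^r s_j \geq r > n$, and similarly for the left action. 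Then $M^{>n} \to M \to M^{\leq n}$ is a levelwise cofibration sequence of levelwise cofibrant bimodules, and \thmref{NAT IN M THM}(b) produces a homotopy fibration sequence
$$F^R_{M^{>n}}(I) \to F^R_M(I) \to P_nF^R_M(I),$$
so it suffices to show $F^R_{M^{>n}}(I)$ is $((n+1)c-1)$-connected.

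Next, filter $M^{>n}$ by the sub-bimodules $(M^{>n})^{\leq k}$, so that $M^{>n} = \hocolim_k (M^{>n})^{\leq k}$ with successive cofibers the single-level bimodules $M(k)$ for $k > n$. Since $B(-, \Oo, I)$ commutes with sequential hocolims in the bimodule variable, $F^R_{M^{>n}}(I) \simeq \hocolim_k F^R_{(M^{>n})^{\leq k}}(I)$, and iterated use of \thmref{NAT IN M THM}(b) yields homotopy fiber sequences
$$F^R_{M(k)}(I) \to F^R_{(M^{>n})^{\leq k}}(I) \to F^R_{(M^{>n})^{\leq k-1}}(I).$$

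The core estimate is on the layers: by \thmref{ONE TERM THM}, $F^R_{M(k)}(I) \simeq z(M(k) \sm_{\Sigma_k \wr \Oo(1)} TQ(I)^{\sm_R k})$. I would first show $TQ(I) = B(\Oo(1), \Oo, I)$ is $(c-1)$-connected: at level $0$, each simplicial degree $(\Oo(1) \circ \Oo^{\circ p} \circ I)(0)$ is a wedge of summands of the form $\Oo(1) \sm \Oo(r_1) \sm_{\Sigma_{r_1}} \cdots \sm I^{\sm_R r}$ with $r \geq 1$ (the $r=0$ terms vanish because $\Oo$ is reduced), and since $R$ and all the $\Oo(r_i)$ and $\Oo(1)$ are connective, each such summand is at least $(c-1)$-connected; geometric realization preserves this bound. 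Then $TQ(I)^{\sm_R k}$ is $(kc-1)$-connected (using $R$ connective), and smashing with the connective $M(k)$ while quotienting by $\Sigma_k \wr \Oo(1)$ preserves this bound, so each layer is $(kc-1)$-connected. For $k \geq n+1$, this is $\geq (n+1)c - 1$.

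A routine induction on $k$, starting from $F^R_{(M^{>n})^{\leq n}}(I) = *$, then shows each $F^R_{(M^{>n})^{\leq k}}(I)$ is $((n+1)c-1)$-connected, and passage to the hocolim preserves this bound, completing the argument. The main technical obstacle I anticipate is justifying rigorously that $M^{>n}$ is a sub-bimodule (where the reducedness of $\Oo$ is essential) and that $B(-, \Oo, I)$ commutes with the sequential hocolim of the $(M^{>n})^{\leq k}$; the connectivity bookkeeping becomes routine once one is comfortable tracking smash products of connective symmetric sequences through the iterated circle product.
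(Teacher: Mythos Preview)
Your identification of the homotopy fiber as $F^R_{M^{>n}}(I)$ via \thmref{NAT IN M THM}(b) matches the paper exactly, and your connectivity estimate for the individual layers $F^R_{M(k)}(I)$ is correct. However, the filtration step contains a genuine error: the truncations $(M^{>n})^{\leq k}$ are \emph{quotient} $\Oo$--bimodules of $M^{>n}$, not sub-bimodules, so there is no natural bimodule map $(M^{>n})^{\leq k-1} \to (M^{>n})^{\leq k}$ and no increasing filtration with $M^{>n}$ as its colimit. Concretely, the right $\Oo$--action on $(M^{>n})^{\leq k}$ carries a summand $M(r) \sm \Oo(s_1) \sm \cdots \sm \Oo(s_r)$ with $n < r \leq k-1$ and $\sum s_j = k$ nontrivially into level $k$, but that level is zero in $(M^{>n})^{\leq k-1}$; so the ``inclusion by zero'' is not a right $\Oo$--module map. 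The natural maps instead form a \emph{tower} $M^{>n} \to (M^{>n})^{\leq k} \to (M^{>n})^{\leq k-1}$, and trying to recover the connectivity of $F^R_{M^{>n}}(I)$ from that tower is circular: the fiber of $F^R_{M^{>n}}(I) \to F^R_{(M^{>n})^{\leq k}}(I)$ is $F^R_{M^{>k}}(I)$, which is the same problem again.

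The paper's proof is more direct and sidesteps this entirely. After identifying the fiber as $B(M^{>n},\Oo,I)$, it simply observes that this bar construction is the realization---hence a homotopy colimit in $R\modd$---of a simplicial object whose terms are wedges of $R$--modules of the form $M(r) \sm \Oo(s_1) \sm \cdots \sm \Oo(s_k) \sm I^{\sm_R t}$ with $t \geq r > n$. Each such summand has at least $n+1$ smash factors of $I$ over the connective $R$, smashed with connective objects, and is therefore $((n+1)c-1)$--connected; the homotopy colimit inherits this bound. You essentially carry out this very argument when you bound the connectivity of $TQ(I)$ by inspecting simplicial levels---applying it directly to $B(M^{>n},\Oo,I)$ gives the result in one step, with no filtration needed.
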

\begin{proof} We need to show that the homotopy fiber is $((n+1)c-1)$--connected. By \thmref{NAT IN M THM}(b), this homotopy fiber identifies with $B(M^{>n}, \Oo,I)$ where \begin{equation*}
M^{> n}(k) =
\begin{cases}
M(k) & \text{if } k> n \\ * & \text{if } k \leq  n.
\end{cases}
\end{equation*}
This fiber then is the homotopy colimit (in $R$--modules) of a diagram of $R$--modules of the form
$$ M(r) \sm \Oo(s_1) \sm  \dots \sm \Oo(s_k) \sm I^{\sm_R t},$$
with $t\geq r > n$.  In particular, it is a homotopy colimit of a diagram of $((n+1)c-1)$--connected $R$--modules, and so is itself $((n+1)c-1)$--connected.
\end{proof}

These results also show the following, when combined with \corref{one term cor}.

\begin{thm} Let $f: I \ra J$ be a morphism in $\AlgO(R)^c$. If $TQ(f)$ is a weak equivalence, so is $P_nF^R_M(f)$ for any $n$ and any levelwise cofibrant $\Oo$--bimodule $M$.  Furthermore, if $R$, $M$, and $\Oo$ are connective, and $I$ and $J$ are 0--connected, then $F^R_M(f)$ is itself will be a weak equivalence.
\end{thm}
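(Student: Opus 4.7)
The plan is to derive the first assertion by induction on $n$ using the Goodwillie tower fibration sequence already exhibited in the sketch proof of the preceding theorem, and then to deduce the second assertion from the connectivity estimate of \propref{connectivity prop}.

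For the induction, the base case $n=1$ is immediate: since $M$ and $\Oo$ are reduced, $M^{\leq 1}$ is concentrated in level $1$ with $M^{\leq 1}(1) = M(1)$, so $P_1 F^R_M(f) = F^R_{M(1)}(f)$, which is a weak equivalence by \corref{one term cor}. For the inductive step, I would observe that the $\Oo$--bimodule sequence
$$M(n) \lra M^{\leq n} \lra M^{\leq n-1}$$
is a levelwise homotopy cofibration of levelwise cofibrant $\Oo$--bimodules: at each level $k<n$ it is $* \to M(k) \xra{=} M(k)$, at level $k=n$ it is $M(n) \xra{=} M(n) \to *$, and at levels $k>n$ it is trivial, so each of these is a cofiber sequence between cofibrant $S$--modules. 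Hence \thmref{NAT IN M THM}(b) produces a natural homotopy fibration sequence
$$F^R_{M(n)} \lra P_n F^R_M \lra P_{n-1} F^R_M$$
in $\AlgO(R)$. Evaluating at $f$ yields a map of homotopy fibration sequences whose fiber map $F^R_{M(n)}(f)$ is a weak equivalence by \corref{one term cor} and whose base map $P_{n-1} F^R_M(f)$ is one by the inductive hypothesis. The long exact sequence of homotopy groups then forces $P_n F^R_M(f)$ to be a weak equivalence as well.

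For the second statement, I would apply \propref{connectivity prop} with $c = 1$ to both $I$ and $J$: the natural maps $F^R_M(X) \to P_n F^R_M(X)$ for $X \in \{I, J\}$ are both $(n+1)$--connected. Hence for any fixed $k$, choosing $n \geq k$ makes the two horizontal maps in the naturality square for $F^R_M \to P_n F^R_M$ applied to $f$ isomorphisms on $\pi_k$; combined with the first part of the theorem, which tells us that the right vertical map $P_n F^R_M(f)$ is a weak equivalence, it follows that $F^R_M(f)$ induces an isomorphism on $\pi_k$. As $k$ was arbitrary, $F^R_M(f)$ is a weak equivalence. I do not expect a genuine obstacle here; the only mild point to verify is that the truncation sequence of $\Oo$--bimodules really does satisfy the hypotheses of \thmref{NAT IN M THM}(b), which is immediate from its pointwise description above.
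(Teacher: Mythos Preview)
Your argument is correct and is precisely the approach the paper intends: the paper's proof is the single sentence ``These results also show the following, when combined with \corref{one term cor},'' and you have simply written out the induction along the tower fiber sequences and the connectivity estimate that this sentence summarizes.
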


Special cases of this theorem appear in \cite{kuhn-TAQtowers} and \cite{HH}.

\section{Application to the augmentation ideal filtration} \label{comp sec}

In our constructions, when the $\Oo$--bimodule is $\Oo$ itself, the resulting functor $I \mapsto F^R_{\Oo}(I)= B(\Oo,\Oo,I)$ is naturally weakly equivalent to the identity.  In this section we study structure on the `augmentation ideal' filtration of $I$ arising from using the levelwise bimodule filtration of $\Oo$ in conjunction with the operad structure $\Oo \circ \Oo \ra \Oo$.

\subsection{Construction and basic properties of the filtration}
\begin{defns} Let $1 \leq i < m \leq \infty$.

\noindent{\bf (a)} Let $\Oo_i^m$ denote the $\Oo$--bimodule with
$
\Oo_i^m(k) =
\begin{cases}
\Oo(k) & \text{if } i \leq k < m \\ * & \text{ otherwise}.
\end{cases}
$

\vskip 8pt

\noindent{\bf (b)} For $I \in \AlgO(R)^c$, let $I^i_m = F^R_{\Oo^m_i}(I) = B(\Oo^m_i, \Oo, I)$.
\end{defns}

Note that there is a natural weak equivalence $I^1_{\infty} \ra I$. We sometimes write $I^i$ for $I^i_{\infty}$, and readers are encouraged to view $I^i_m$ as `$I^i/I^m$'.

For $j\leq i$ and $n \leq m$, it is not hard to see that the evident map
$$ \Oo_m^i \ra \Oo_n^j$$
is a map of $\Oo$--bimodules, and thus induces a natural maps
$$ I_m^i \ra I^j_n$$
for all $I \in \AlgO(R)^c$.

Special cases of these are illustrated in the next examples.

\begin{ex}  $I \in \AlgO(R)^c$ has a natural `augmentation ideal' filtration
$$ I \xla{\sim} I^1 \la I^2 \la I^3 \la \dots.$$
\end{ex}

\begin{ex}  $I^1_n = P_{n-1}F^R_{\Oo}(I)$ in the notation of the last section, so the tower
$$ I^1_2 \la I^1_3 \la I^1_4 \la \dots$$
identifies with the Goodwillie tower of the identity functor on $\AlgO(R)$.  This tower, defined as we do here, is the subject of the study \cite{HH}.
\end{ex}

These examples are related: the filtration of the first example appears as the homotopy fibers of the maps from $I$ to the tower in the second example.  More precisely, there are homotopy fiber sequences
$$ I^n \ra I^1 \ra I^1_n.$$
This is a special case of property (b) in the next theorem.

\begin{thm} \label{slices thm}  The functors $I \mapsto I^i_n$ satisfy the following properties.

\vskip 8pt

\noindent{\bf (a)} They preserve weak equivalences in the variable $I \in \AlgO(R)^c$.

\vskip 8pt

\noindent{\bf (b)} For $k<l<m$, the sequence $ I^l_m \ra I^k_m \ra I^k_l$
is a homotopy fiber sequence.

\vskip 8pt

\noindent{\bf (c)} There are natural isomorphisms $I^1_2 = z(TQ(I))$, and more generally, $I_{k+1}^k = z(\Oo(k) \sm_{\Sigma_k\wr \Oo(1)} TQ(I)^{\sm_R k})$.

\vskip 8pt

\noindent{\bf (d)} Let $R \ra R^{\prime}$ be a map of commutative $S$--algebras.  There is a natural isomorphism $R^{\prime} \sm_R I^i_n \simeq (R^{\prime} \sm_R I)^i_n$.
\end{thm}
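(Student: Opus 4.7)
All four parts flow from \thmref{NAT IN M THM} and \thmref{ONE TERM THM} once one observes that, under Assumption \ref{COFIB ASSUM}, each bimodule $\Oo^i_n$ is levelwise cofibrant (its levels are either $\Oo(k)$, which is cofibrant in $S\modd$, or the point). Granting this, part (a) is the $I$--variable statement of \thmref{NAT IN M THM}(a) applied to $M = \Oo^i_n$, and part (d) is the $M = \Oo^i_n$ instance of \thmref{NAT IN M THM}(d).

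For (b), the task is to identify the underlying $\Oo$--bimodule sequence and then apply \thmref{NAT IN M THM}(b). Note that $\Oo^\infty_k$ is a sub-bimodule of $\Oo$ (the composition of operations of arities $\geq k$ is an operation of arity $\geq k$, since the operad is reduced), and $\Oo^m_l = \Oo^\infty_l/\Oo^\infty_m$ as bimodules. The natural inclusion and quotient thus fit into a sequence of $\Oo$--bimodules
$$ \Oo^m_l \lra \Oo^m_k \lra \Oo^l_k, $$
and arity by arity this is: $* \to * \to *$ for $j<k$ or $j\geq m$; $* \to \Oo(j) \xra{=} \Oo(j)$ for $k\leq j < l$; and $\Oo(j) \xra{=} \Oo(j) \to *$ for $l\leq j < m$. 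Each is trivially a homotopy cofibration sequence of cofibrant $S$--modules, so the hypothesis of \thmref{NAT IN M THM}(b) is satisfied and we obtain the desired homotopy fibration sequence $I^l_m \ra I^k_m \ra I^k_l$.

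For (c), the bimodule $\Oo^{k+1}_k$ is concentrated at level $k$ with value $\Oo(k)$, which is cofibrant in $S\modd$ by Assumption \ref{COFIB ASSUM}. The inherited $\Oo$--bimodule structure is forced, for reasons of arity, to factor through the left $\Oo(1)$--action $\Oo(1)\sm \Oo(k)\to \Oo(k)$ and the right $\Sigma_k\wr \Oo(1)$--action $\Oo(k)\sm \Oo(1)^{\sm k}\to \Oo(k)$. Applying \thmref{ONE TERM THM} with $M(k)=\Oo(k)$ therefore yields
$$ I^k_{k+1} = F^R_{\Oo^{k+1}_k}(I) \cong z\bigl(\Oo(k) \sm_{\Sigma_k\wr\Oo(1)} TQ(I)^{\sm_R k}\bigr). $$
Specializing to $k=1$ and noting that $\Sigma_1 \wr \Oo(1) = \Oo(1)$, so that $\Oo(1) \sm_{\Oo(1)} TQ(I) \cong TQ(I)$, recovers $I^1_2 \cong z(TQ(I))$.

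There is no real obstacle here: everything reduces to bookkeeping about the bimodules $\Oo^i_n$, combined with the observation that at each arity the interval-filtration structure of these bimodules makes both levelwise cofibrancy and the cofiber sequence in (b) hold by inspection. The only input beyond the formal properties of $F^R_M$ established earlier is the cofibrancy of each $\Oo(k)$ guaranteed by Assumption \ref{COFIB ASSUM}.
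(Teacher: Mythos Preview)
Your proposal is correct and follows exactly the approach the paper takes: each part is deduced from the corresponding general statement (\thmref{NAT IN M THM}(a),(b),(d) and \thmref{ONE TERM THM}) applied to the bimodules $\Oo_i^n$, with the levelwise cofibrancy coming from Assumption~\ref{COFIB ASSUM}. Your write-up is in fact more explicit than the paper's, which merely asserts that ``all of these properties follow immediately'' and spells out only the bimodule sequence $\Oo_l^m \to \Oo_k^m \to \Oo_k^l$ for part~(b); one small slip is that in your opening paragraph the bimodule underlying $I^i_n$ should be written $\Oo_i^n$ (equivalently $\Oo^n_i$), not $\Oo^i_n$.
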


All of these properties follow immediately from the more general results of \secref{GEN RESULTS SEC}.  For example, part (b) follows from \thmref{NAT IN M THM}(b) applied to the sequence of $\Oo$--bimodules
$$ \Oo_l^m \ra \Oo_k^m \ra \Oo_k^l.$$

Our connectivity estimates of \secref{goodwillie tower sec} give the following.

\begin{prop} \label{In connectivity prop}  Suppose $R$ and $\Oo$ are connective. If $I$ is $(c-1)$--con\-nected, then $I^n$ is $(nc-1)$--connected.
\end{prop}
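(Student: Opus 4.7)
The plan is to follow the template of \propref{connectivity prop}, now applied to the bimodule $\Oo^\infty_n$ itself rather than to a top-truncation piece $M^{>n}$. Recall that $I^n = F^R_{\Oo^\infty_n}(I) = B(\Oo^\infty_n,\Oo,I)$ is the geometric realization of the simplicial $R$--module $B_\bullet(\Oo^\infty_n,\Oo,I)$ whose $p$--simplices are $\Oo^\infty_n \circ \Oo^{\circ p} \circ I$. By \propref{BARCOF PROP} this simplicial object is Reedy cofibrant (via the cited Theorem~1.6 of \cite{Pereira2014}), so its realization preserves any levelwise connectivity bound; it therefore suffices to prove that each $B_p(\Oo^\infty_n,\Oo,I)$ is $(nc-1)$--connected.

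Next I would unpack the iterated composition product. Because $I$ is concentrated in level $0$ and both $\Oo^\infty_n$ and $\Oo$ are reduced, each $B_p$ decomposes (ignoring $\Sigma$--actions) as a wedge of $R$--modules of the form
$$\Oo^\infty_n(r) \sm \Oo(s_1) \sm \cdots \sm \Oo(s_k) \sm I^{\sm_R t}.$$
The outer factor forces $r \geq n$, since $\Oo^\infty_n(j) = *$ for $j < n$. Moreover, because $\Oo$ is reduced every branching index in the tree of compositions is at least $1$, and tracing down to the leaves of this tree gives $t \geq r \geq n$.

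Finally I would combine standard connectivity estimates: $\Oo$ connective makes each $\Oo(\cdot)$ factor $(-1)$--connected, and $I$ being $(c-1)$--connected makes $I^{\sm_R t}$ at least $(tc-1)$--connected (with $R$ connective, the $\sm_R$ connectivity calculus matches the $\sm$--over--$S$ version). A smash of finitely many connective $S$--module factors with a $(tc-1)$--connected $R$--module remains $(tc-1)$--connected; with $t \geq n$, each summand is therefore $(nc-1)$--connected, as then are arbitrary wedges of such summands and the Reedy-cofibrant realization $|B_\bullet|$.

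The main obstacle is essentially bookkeeping rather than new ideas: one has to be explicit about the tree structure underlying iterated $\circ$ to see that $t \geq r \geq n$, and careful when passing between $\sm$ over $S$ and $\sm_R$ over $R$ in the connectivity count. Since \propref{connectivity prop} already executed this style of argument for the closely related bimodule $M^{>n}$ (where the constraint was $t \geq r > n$), the adaptation to $\Oo^\infty_n$ with the cleaner constraint $t \geq r \geq n$ is immediate.
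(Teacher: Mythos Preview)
Your argument is correct and is essentially the paper's own approach: the paper derives \propref{In connectivity prop} directly from \propref{connectivity prop}, observing that $I^n = B(\Oo^{\infty}_n,\Oo,I)$ is exactly $B(\Oo^{>n-1},\Oo,I)$, the homotopy fiber analyzed there, so the same wedge decomposition with $t \geq r \geq n$ gives the $(nc-1)$--connectivity. Your write-up simply re-runs that argument in place rather than citing it, with the added care about Reedy cofibrancy of $B_\bullet$ to justify that realization preserves the levelwise connectivity bound.
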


\subsection{Composition properties of the filtration}

Now we look at composition structure.  It is not hard to see that the operad composition
$$ \mu: \Oo \circ \Oo \ra \Oo$$
induces maps of $\Oo$--bimodules
$$ \mu: \Oo_i^{\infty} \co \Oo_j^{\infty} \ra \Oo_{ij}^{\infty}.$$

These pairings in turn define natural maps
$$ \mu: (I^j)^i \ra I^{i j}$$
for all $I \in \AlgO(R)^c$.

\vskip 8pt

With a little more care, one can check the following.

\begin{lem} $\mu: \Oo \circ \Oo \ra \Oo$ induces maps of $\Oo$--bimodules
$$ \mu: \Oo_i^m \co \Oo_j^n \ra \Oo_{i j}^{\min(i j+(n-j),m j)},$$
and thus induces natural maps
$$ \mu: (I^j_n)^i_m \ra I^{i j}_{\min(i j+(n-j),m j)}.$$
\end{lem}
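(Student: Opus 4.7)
The plan is to upgrade $\mu : \Oo \co \Oo \to \Oo$ to the bimodule map in the statement by pushing through the quotient presentations of $\Oo_i^m$ and $\Oo_j^n$, and then transfer the result to algebras using the bar construction. For the first step, each $\Oo_k^\infty$ is manifestly a sub-$\Oo$-bimodule of $\Oo$ (both actions preserve arities $\geq k$), so the bimodules in the lemma are genuine quotients $\Oo_i^m = \Oo_i^\infty/\Oo_m^\infty$ and $\Oo_j^n = \Oo_j^\infty/\Oo_n^\infty$. Since $\co$ is a coequalizer, it preserves colimits in each variable, so $\Oo_i^m \co \Oo_j^n$ is the quotient of $\Oo_i^\infty \co \Oo_j^\infty$ by the combined image of $\Oo_m^\infty \co \Oo_j^\infty$ and $\Oo_i^\infty \co \Oo_n^\infty$ under the evident inclusions.

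Next I would chase arities under $\mu$. A representative $\theta(\gamma_1, \dots, \gamma_p)$ of $\Oo_i^\infty \co \Oo_j^\infty$ has $\theta \in \Oo(p)$ with $p \geq i$ and $\gamma_l \in \Oo(r_l)$ with $r_l \geq j$, and so lands in $\Oo(\sum r_l)$ with $\sum r_l \geq pj \geq ij$; hence $\mu$ carries $\Oo_i^\infty \co \Oo_j^\infty$ into $\Oo_{ij}^\infty$. Restricted to $\Oo_m^\infty \co \Oo_j^\infty$ we have $p \geq m$, so the image lies in $\Oo_{mj}^\infty$; restricted to $\Oo_i^\infty \co \Oo_n^\infty$ we have each $r_l \geq n$, so the image lies in $\Oo_{in}^\infty \subseteq \Oo_{ij+(n-j)}^\infty$, where the last inclusion is the elementary inequality $(i-1)(n-j) \geq 0$. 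The combined image of the two relations therefore sits inside $\Oo_X^\infty$ for $X := \min(ij+(n-j), mj)$, so $\mu$ descends to the claimed map $\Oo_i^m \co \Oo_j^n \to \Oo_{ij}^\infty/\Oo_X^\infty = \Oo_{ij}^X$.

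For the algebra-level claim, \thmref{NAT IN M THM}(c) gives $(I^j_n)^i_m = F^R_{\Oo_i^m}(F^R_{\Oo_j^n}(I)) \simeq F^R_{B(\Oo_i^m, \Oo, \Oo_j^n)}(I)$; composing the natural augmentation $B(\Oo_i^m, \Oo, \Oo_j^n) \to \Oo_i^m \co \Oo_j^n$ with the bimodule map above produces a bimodule map into $\Oo_{ij}^X$, and applying $F^R_{-}(I)$ (functorial in the bimodule by \thmref{NAT IN M THM}(a)) yields the required $(I^j_n)^i_m \to I^{ij}_X$. The only mildly delicate point is hitting the stated target: the arity bound from $\Oo_i^\infty \co \Oo_n^\infty$ is naively $in$, and the lemma simply records the cleaner, weaker inequality $in \geq ij+(n-j)$; once the quotient presentations and the right exactness of $\co$ are in hand, the rest is diagram chasing.
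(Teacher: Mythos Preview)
There is a genuine gap in the first half of your argument. The composition product $\circ$ (and hence $\co$) does \emph{not} preserve colimits in the right variable: the formula $(M\circ N)(s)=\bigvee_r M(r)\sm_{\Sigma_r}\bigl(\bigvee_\phi N(s_1)\sm\cdots\sm N(s_r)\bigr)$ involves iterated smash powers of $N$, and $(A/B)^{\sm r}$ is the quotient of $A^{\sm r}$ by summands in which \emph{at least one} factor lies in $B$, not all of them. Your justification ``since $\co$ is a coequalizer, it preserves colimits in each variable'' fails because the inputs $M\circ\Oo\circ N$ and $M\circ N$ to that coequalizer already fail to preserve colimits in $N$. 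Consequently your presentation of $\Oo_i^m\co\Oo_j^n$ as the quotient of $\Oo_i^\infty\co\Oo_j^\infty$ by the images of $\Oo_m^\infty\co\Oo_j^\infty$ and $\Oo_i^\infty\co\Oo_n^\infty$ is wrong: the actual kernel contains every summand $\theta(\gamma_1,\dots,\gamma_p)$ in which \emph{some} $\gamma_l$ has arity $\geq n$, not only those where all do.

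With the correct kernel, your arity chase changes. If just one $r_l\geq n$ while the others satisfy $r_l\geq j$, then $\sum r_l\geq (p-1)j+n\geq (i-1)j+n=ij+(n-j)$; this is exactly the bound in the statement, and your ``weakening'' from $in$ to $ij+(n-j)$ is not optional but forced. The paper's proof avoids the pitfall by working at the level of $\circ$ rather than $\co$: it explicitly enumerates the wedge summands of $(\Oo_i^\infty\circ\Oo_j^\infty)(s)$ and checks which ones die under the quotient to $\Oo_i^m\circ\Oo_j^n$, obtaining the dotted arrow into $\Oo_{ij}^N$ before passing to $\co$ at the very end. Your algebra-level deduction via $F^R_{B(\Oo_i^m,\Oo,\Oo_j^n)}$ and the augmentation to $\Oo_i^m\co\Oo_j^n$ is fine and matches the paper's remark following \thmref{NAT IN M THM}.
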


\begin{proof}

We first check that if $N = \min(ij+(n-j),mj)$, then the dotted arrow exists in the diagram
\begin{equation*}
\xymatrix{
\Oo_i^{\infty} \circ \Oo_j^{\infty} \ar[d] \ar[r] & \Oo_{i j}^{\infty} \ar[d]  \\
\Oo_i^{m} \circ \Oo_j^{n}           \ar@{.>}[r]   & \Oo_{i j}^{N}. }
\end{equation*}

Now $(\Oo_i^{\infty} \circ \Oo_j^{\infty})(s)$ equals the wedge of $S$--modules of the form $\Oo(r) \sm \Oo(s_1) \sm \dots \sm \Oo(s_r)$ such that $s=s_1+\dots+s_r$, $i \leq r$, and $j\leq s_k$ for all $k$.  (All such modules occur, some with multiplicities greater than 1.)

Such a wedge summand maps to $*$ under the quotient $\Oo_i^{\infty} \circ \Oo_j^{\infty} \ra \Oo_i^{m} \circ \Oo_j^{n}$ if either $r\geq m$ or $s_k \geq n$ for at least one $k$.  In the first case, it follows that $s\geq mj$. In the second case, it follows that $s\geq (r-1)j+n \geq (i-1)j+n = i j + (n-j)$.  We conclude that if $N$ is less than or equal to both $mj$ and $ij+(n-j)$, then the dotted arrow in the diagram above exists.

The bimodule map $\Oo_i^m \circ \Oo_j^n \ra \Oo_{i j}^{\min(i j+(n-j),m j)}$ then induces an $\Oo$--bimodule map
$\Oo_i^m \co \Oo_j^n \ra \Oo_{i j}^{\min(i j+(n-j),m j)}$.  This follows formally from the fact that each of the maps $\Oo \hookleftarrow \Oo_i^{\infty} \twoheadrightarrow \Oo_i^m$ are maps of $\Oo$--bimodules, combined with the evident fact that the operad pairing $\Oo \circ \Oo \ra \Oo$ induces a map $\Oo \co \Oo \ra \Oo$.
\end{proof}

\begin{ex} For simplicity, let $D_i(M) = \Oo(i) \sm_{\Sigma_i \wr \Oo(1)} M^{\sm_R i}$, for $M \in R\Oo(1)\modd$, and let $T=TQ$.  With this notation, there is an isomorphism $I^i_{i+1} \simeq zD_iT(I)$, and a commutative diagram
\begin{equation*}
\xymatrix{
(I^j_{j+1})^i_{i+1} \ar@{=}[d] \ar[rr]^{\mu} && I^{i j}_{i j+1} \ar@{=}[d]  \\
z D_i T(z D_j T(I)) \ar[r] &z D_i D_j T(I) \ar[r] &z D_{i j}T(I) }
\end{equation*}
where the lower left map is induced by the counit $T z M \ra M$, and the lower right map is induced by the operad structure map $\Oo(i) \sm \Oo(j)^{\sm i} \ra \Oo(i j)$.
\end{ex}

\subsection{Application to lifting filtrations}

\begin{thm} \label{lifting thm}  Let $I,J \in \AlgO(R)^c$, and let $f: I \ra J^d$ be a morphism in $\AlgO(R)$.  Then $f$ induces compatible $\Oo$--algebra maps $f_n: I^n \ra J^{dn}$ for all $n$, and the assignment $f \mapsto f_n$ is both functorial and preserves weak equivalences.
\end{thm}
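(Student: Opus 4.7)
The plan is to build $f_n$ by combining two naturality principles already established in \secref{GEN RESULTS SEC}. Following the remark after \thmref{NAT IN M THM}, the $\Oo$--bimodule pairing
$$\mu: \Oo_n^\infty \co \Oo_d^\infty \lra \Oo_{dn}^\infty$$
(the case $m = n = \infty$ of the preceding lemma) induces a natural transformation
$$\bar\mu_{n,d}: F^R_{\Oo_n^\infty} \circ F^R_{\Oo_d^\infty} \lra F^R_{\Oo_{dn}^\infty}$$
of endofunctors of $\AlgO(R)^c$, which on $J$ is a map $(J^d)^n \to J^{dn}$. With this in hand, the definition is
$$f_n := \bar\mu_{n,d}(J) \circ F^R_{\Oo_n^\infty}(f): \; I^n \lra (J^d)^n \lra J^{dn}.$$

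For compatibility with the filtration maps, the key observation is that for $n \geq m$ the square of $\Oo$--bimodules
\begin{equation*}
\xymatrix{
\Oo_n^\infty \co \Oo_d^\infty \ar[d] \ar[r]^-{\mu} & \Oo_{dn}^\infty \ar[d] \\
\Oo_m^\infty \co \Oo_d^\infty \ar[r]_-{\mu} & \Oo_{dm}^\infty
}
\end{equation*}
commutes, which is immediate by inspecting wedge summands as in the preceding lemma. Applying $F^R_{(-)}(J)$ gives commutativity of one of the two squares needed, and naturality in $M$ of the assignment $M \mapsto F^R_M(f)$ for $M = \Oo_n^\infty \to \Oo_m^\infty$ gives the other; concatenating these produces the desired identity $f_m \circ (I^n \to I^m) = (J^{dn} \to J^{dm}) \circ f_n$.

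Finally, functoriality and preservation of weak equivalences are transparent from this description: $f_n$ is the composite of $F^R_{\Oo_n^\infty}(f)$ with a morphism that depends only on $J$, not on $f$. Functoriality in $f$ therefore reduces to the functoriality of $F^R_{\Oo_n^\infty}$ in its algebra input, while preservation of weak equivalences in $I$ and $J$ follows from \thmref{NAT IN M THM}(a). No substantial obstacle is expected: the construction of $\bar\mu_{n,d}$ and its naturality properties are direct consequences of what has already been built in \secref{GEN RESULTS SEC}, and the only remaining task is the routine bookkeeping of a few commutative diagrams of $\Oo$--bimodules.
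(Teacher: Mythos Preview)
Your construction is exactly the paper's: defining $f_n$ as the composite $I^n \xra{F^R_{\Oo_n^\infty}(f)} (J^d)^n \xra{\mu} J^{dn}$. The paper's proof is a single line giving this composite and leaves compatibility, functoriality, and preservation of weak equivalences as implicit; your version simply spells these out via the evident bimodule diagrams and \thmref{NAT IN M THM}(a), which is the intended justification.
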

\begin{proof}  Let $f_n$ be the composite $I^n \xra{f^n} (J^d)^n \xra{\mu} J^{dn}$.
\end{proof}

\begin{defn}  \label{AQ FILT DEFN} Say that a map $f \in [I,J]_{\Alg}$ has AQ-filtration\footnote{The reader can decide if AQ stands for Andr\'e-Quillen or Adams-Quillen.} $s$ if $f$ factors in $ho(\AlgO(R))$ as the composition of $s$ maps
$$I = I(0) \xra{f(1)} I(1) \xra{f(2)} I(2) \ra \dots \ra I(s-1) \xra{f(s)} I(s)=J$$
such that $TQ(f(i))$ is null for each $i$..
\end{defn}

\begin{thm} \label{AQ lifting thm} Let $f \in [I,J]_{\Alg}$ have AQ-filtration $s$.  Then there exists $\tilde f \in [I, J^{2^s}]_{\Alg}$ such that
\begin{equation*}
\xymatrix{
& J^{2^s} \ar[d]  \\
I \ar[r]^-f \ar[ru]^-{\tilde f}& J }
\end{equation*}
commutes in $ho(\AlgO(R))$.
\end{thm}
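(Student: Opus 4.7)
I would argue by induction on $s$. The main content is the base case $s=1$; the general case then follows by iteratively promoting lifts through the powers $J^{2^i}$ using the composition pairings of \thmref{lifting thm}.

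\emph{Base case.} Suppose $TQ(f)$ is null for $f: I \ra J$ in $ho(\AlgO(R))$. By \thmref{slices thm}(b) applied with $k=1$, $l=2$, $m=\infty$ there is a homotopy fibre sequence $J^2 \ra J^1 \ra J^1_2$, while \thmref{slices thm}(c) identifies $J \xla{\sim} J^1$ and $J^1_2 \simeq z(TQ(J))$. Unpacking the constructions, the natural projection $J^1 \ra J^1_2$ is induced by the bimodule map $\Oo^{\infty}_1 \ra \Oo^2_1 = \Oo(1)$, i.e., by the operad map $\Oo \ra \Oo(1)$; under the identifications above this is precisely the unit $J \ra z(TQ(J))$ of the derived $(TQ,z)$-adjunction. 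Hence \corref{TQ homotopy cor} identifies the class of the composite $I \xra{f} J \ra J^1_2$ in $[I, J^1_2]_{\Alg}$ with $TQ(f) \in [TQ(I), TQ(J)]_{\Mod}$, which is null by hypothesis. A nullhomotopy of this composite in $\AlgO(R)$ therefore produces the desired lift $\tilde f: I \ra J^2$ of $f$.

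\emph{Inductive step.} Assuming the theorem for filtration $s-1$, write $f = f(s) \circ f'$ in $ho(\AlgO(R))$, where $f' = f(s-1) \circ \cdots \circ f(1): I \ra I(s-1)$ has AQ-filtration $s-1$ and $TQ(f(s))$ is null. By induction, $f'$ admits a lift $\widetilde{f'}: I \ra I(s-1)^{2^{s-1}}$. By the base case, $f(s): I(s-1) \ra J$ admits a lift $\widetilde{f(s)}: I(s-1) \ra J^2$. Applying \thmref{lifting thm} with $d=2$ and $n=2^{s-1}$ yields $\widetilde{f(s)}_{2^{s-1}}: I(s-1)^{2^{s-1}} \ra J^{2^s}$, and we set $\tilde f := \widetilde{f(s)}_{2^{s-1}} \circ \widetilde{f'}$. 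Compatibility of the construction $g \mapsto g_n$ in \thmref{lifting thm} with the structural projections down to the first slice, combined with the inductive equality $(I(s-1)^{2^{s-1}} \ra I(s-1)) \circ \widetilde{f'} = f'$, then yields $(J^{2^s} \ra J) \circ \tilde f = f(s) \circ f' = f$ in $ho(\AlgO(R))$, completing the induction.

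\emph{Main obstacle.} The crux is the base case, and specifically the identification of the structural projection $J \simeq J^1 \ra J^1_2$ with the unit of the derived adjunction $TQ \dashv z$, so that nullity of $TQ(f)$ translates into a genuine nullhomotopy of $I \ra J \ra J^1_2$ inside $\AlgO(R)$ via \corref{TQ homotopy cor}. Once this reduction is in hand, the inductive step is essentially formal, resting only on the functoriality of $(-)^n$ from \thmref{slices thm}(a) and the compatibility packaged into \thmref{lifting thm}.
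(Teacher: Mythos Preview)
Your proof is correct and follows essentially the same approach as the paper's: both use the homotopy fibre sequence $J^2 \to J^1 \to J^1_2 \simeq z(TQ(J))$ together with the derived adjunction of \corref{TQ homotopy cor} to lift each $f(i)$ with $TQ(f(i))$ null to a map into $I(i)^2$, and then invoke \thmref{lifting thm} to propagate these lifts up the tower. The only difference is organizational: you phrase the argument as an induction on $s$, whereas the paper simply writes down all $s$ lifts $\tilde f(i)_{2^{i-1}}\colon I(i-1)^{2^{i-1}} \to I(i)^{2^i}$ at once and defines $\tilde f$ as their composite; unwinding your induction recovers exactly this composite.
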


\begin{proof} We work in $ho(\AlgO(R))$.

Let $f = f(s) \circ \dots \circ f(1)$ as in Definition \ref{AQ FILT DEFN}.

For each $i$ between 1 and $s$, there is an exact sequence of pointed sets
$$ [I(i-1),I(i)^2]_{\Alg} \ra [I(i-1),I(i)]_{\Alg} \ra [I(i-1),I(i)^1_2]_{\Alg},$$
and there are identifications
$$ [I(i-1),I(i)^1_2]_{\Alg} \simeq [I(i-1),z(TQ(I(i)))]_{\Alg} \simeq [TQ(I(i-1)),TQ(I(i))]_{\Mod}.$$

It follows that since $TQ(f(i))$ is null, $f(i)$ lifts to $\tilde f(i): I(i-1) \ra I(i)^2$.  \thmref{lifting thm} then gives maps $\tilde f(i)_{2^{i-1}}: I(i-1)^{2^{i-1}} \ra I(i)^{2^i}$.  Now let $\tilde f$ be the composite of these $s$ maps:
$ \tilde f = \tilde f(s)_{2^{s-1}} \circ \dots \circ \tilde f(1)$.
\end{proof}

The theorem, combined with \propref{In connectivity prop}, has the following corollary.

\begin{cor} \label{AQ lifting cor}  Suppose that $R$ and $\Oo$ are connective and $J \in \Alg_{\Oo}(R)$ is $(c-1)$--connected.  If $f: I \ra J$ has $AQ$-filtration $s$, then $f_*:\pi_*(I) \ra \pi_*(J)$ will be zero for $*<2^sc$.
\end{cor}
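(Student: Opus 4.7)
The plan is to combine the lifting theorem just proved with the connectivity estimate from the previous section; the result then falls out almost immediately, with essentially no new technical input required.

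First I would invoke \thmref{AQ lifting thm}: since $f \in [I,J]_{\Alg}$ has AQ-filtration $s$, there is a map $\tilde f \in [I, J^{2^s}]_{\Alg}$ fitting into a commutative diagram in $ho(\AlgO(R))$
\begin{equation*}
\xymatrix{
& J^{2^s} \ar[d]^-p  \\
I \ar[r]^-f \ar[ru]^-{\tilde f}& J, }
\end{equation*}
where $p: J^{2^s} \to J$ is the natural map in the augmentation ideal filtration. In particular, on homotopy groups $f_* = p_* \circ \tilde f_*$.

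Next I would apply \propref{In connectivity prop} to $J$: since $R$ and $\Oo$ are connective and $J$ is $(c-1)$--connected, the $\Oo$--algebra $J^{2^s}$ is $(2^s c - 1)$--connected. Hence $\pi_*(J^{2^s}) = 0$ for all $* < 2^s c$, which forces $\tilde f_* = 0$ in that range, and therefore $f_* = p_* \circ \tilde f_* = 0$ for $* < 2^s c$, as desired.

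The only point worth a moment's care is that one must remember that the filtration step $J^{2^s}$ referenced in \thmref{AQ lifting thm} really is $F^R_{\Oo^{\infty}_{2^s}}(J)$ to which \propref{In connectivity prop} applies; there is no obstacle here, since both statements use the same notation $I^n = I^n_\infty$ introduced at the start of \secref{comp sec}. So I do not expect a genuinely hard step: the entire content of the corollary is already packaged in the lifting theorem and the connectivity estimate, and the proof is simply the composition of these two facts.
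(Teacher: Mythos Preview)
Your proof is correct and is exactly the argument the paper intends: the corollary is stated immediately after \thmref{AQ lifting thm} with the remark that it follows by combining that theorem with \propref{In connectivity prop}, and you have simply spelled out this combination.
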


For more results in this spirit see \cite{kuhn hurewicz}.

\section{Deferred proofs} \label{deferred proofs sec}

In this section we prove Propositions \ref{COFFGT PROP}, \ref{BARCOF PROP}, and \ref{BARDEREQ PROP} and Theorem \ref{NAT IN M THM}. When $R=S$, so that our algebras just have the underlying structure of an $S$--module, these results can be deduced from the second author's work, specifically  \cite[Thm.1.1]{Pereira2014}. The case of a general $R$ requires a suitable generalization of that result, which we state as Theorem \ref{CIRCO POS R THM} below.

\subsection{The homotopical behavior of the composition product}

Fixing a commutative $S$--algebra $R$, it is useful to generalize the context slightly.

\begin{notation}
Let $\mathcal{P}$ be an operad in $R\modd$, i.e. a monoid object for the monoidal structure $\circ_R$ in $\mathsf{Sym}(R)$ defined just as in (\ref{COMPPROD EQ}) but with $\wedge$ replaced by $\wedge_R$.   We then denote by  $\Mod^r_{\mathcal{P}}$, $\Mod^l_{\mathcal{P}}$, and $\Alg_{\mathcal{P}}$ the associated categories of left modules, right modules, and algebras over $\mathcal{P}$ in $\Sym(R)$.  We endow $\Mod^l_{\mathcal{P}}$, and $\Alg_{\mathcal{P}}$ with the model structure as in \thmref{PROJPOS THM}\footnote{Note that we do not equip $\Mod^r_{\mathcal{P}}$ with a model structure.}.
\end{notation}

\begin{rem}\label{INDUCEOPERAD REM}
If $\Oo$ is an operad in $S\modd$, then the symmetric sequence $R\wedge \Oo$, defined as $(R \wedge \Oo)(n)=R\wedge \Oo(n)$ is naturally an operad in $R\modd$.  It can be readily checked that there are isomorphisms of model categories
\begin{equation}\label{IDENTIFICATION EQ}
\Alg_{R \wedge \Oo} \simeq \Alg_{\Oo}(R) \text{ and } \Mod^l_{R \wedge \Oo} \simeq \Mod^l_{\Oo}(R).
\end{equation}
\end{rem}

To state our main technical theorem, we need the following construction.

\begin{defn}
Suppose given a map $f_1: M \ra N$ in $\Mod^r_{\mathcal{P}}$
 and a map $f_2: A \ra B$ in $\Mod^l_{\mathcal{P}}$.
Let $(M \circ_{\mathcal{P}} B) \vee_{M \circ_{\mathcal{P}} A} (N \circ_{\mathcal{P}} A)$ be the pushout of the diagram
 \begin{equation*}
\xymatrix{
M \circ_{\mathcal{P}} A \ar[d]_{M \circ_{\mathcal{P}} f_2 } \ar[r]^{f_1 \circ_{\mathcal{P}} A} & N \circ_{\mathcal{P}} A  \\
M \circ_{\mathcal{P}} B                                                                        &  }
\end{equation*}
in $\Sym(R)$,
and then define the {\em pushout circle product} of $f_1$ and $f_2$, to be the natural map
$$ f_1 \square^{\circ_{\mathcal{P}}} f_2: (M \circ_{\mathcal{P}} B) \vee_{M \circ_{\mathcal{P}} A} (N \circ_{\mathcal{P}} A) \ra N \circ_{\mathcal{P}} B.$$
\end{defn}

\begin{thm}\label{CIRCO POS R THM} Suppose $f_2: A \ra B$ is a cofibration between cofibrant objects in $\Mod^l_{\mathcal{P}}$. If a $f_1: M \ra N$ in $\Mod^r_{\mathcal{P}}$ is an underlying positive cofibration in $\Sym(R)$, then so is
$$ f_1 \square^{\circ_{\mathcal{P}}} f_2: (M \circ_{\mathcal{P}} B) \vee_{M \circ_{\mathcal{P}} A} (N \circ_{\mathcal{P}} A) \ra N \circ_{\mathcal{P}} B.$$
Furthermore, this map will be a weak equivalence if either $f_1$ or $f_2$ is a weak equivalence.
\end{thm}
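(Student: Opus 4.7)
The plan is to adapt the proof of \cite[Thm.1.1]{Pereira2014} from the base case $R=S$ to a general commutative $S$-algebra $R$. Thanks to Remark \ref{INDUCEOPERAD REM} and the identification (\ref{IDENTIFICATION EQ}), the categorical structure of $\mathcal{P}$-modules in $\Sym(R)$ is formally parallel to that of $\Oo$-modules in $\Sym(S)$, with $\wedge$ replaced by $\wedge_R$ throughout. The main task is to confirm that the positive model-theoretic bookkeeping still goes through after this base change.

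First I would reduce to the case of a single cell attachment. For fixed $f_1: M \ra N$, the functor $f_1 \square^{\circ_{\mathcal{P}}}(-)$ commutes with pushouts, transfinite compositions, and retracts in its second variable, because both $M \circ_{\mathcal{P}} (-)$ and $N \circ_{\mathcal{P}} (-)$ preserve all colimits from $\Mod^l_{\mathcal{P}}$ to $\Sym(R)$. Since the class of positive cofibrations (resp.\ trivial positive cofibrations) in $\Sym(R)$ is closed under these operations, a standard cell-induction argument reduces the theorem to the case in which $f_2$ is a pushout
$$ A \lra A \coprod_{\mathcal{P} \circ_R X} \mathcal{P} \circ_R Y $$
along $\mathcal{P}\circ_R g$ for a generating positive (trivial) cofibration $g: X \ra Y$ in $\Sym(R)$, with $A$ a cofibrant left $\mathcal{P}$-module.

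The core computational input is the natural isomorphism
$$ N \circ_{\mathcal{P}} (\mathcal{P} \circ_R Y) \simeq N \circ_R Y,$$
together with the pushout-product axiom for the composition product $\circ_R$ on $\Sym(R)$ with its positive model structure. This latter fact is a direct transcription of the corresponding statement for $\circ$ on $\Sym(S)$ proved in \cite[Thm.3.8, \S5.3]{Pereira2014}, since the $\Sigma_n$-equivariant analysis of iterated $\circ_R$-powers is identical once $\wedge$ is replaced by $\wedge_R$ and $S$ by $R$. For a general cofibrant $A$ I would then invoke the standard filtration
$$ A = B_0 \ra B_1 \ra B_2 \ra \cdots \ra B = A \coprod_{\mathcal{P} \circ_R X} \mathcal{P} \circ_R Y $$
in $\Mod^l_{\mathcal{P}}$, whose associated graded pieces $B_n/B_{n-1}$ are built from the iterated pushout circle powers $Q^n(g)$ of $g$ smashed over $\Sigma_n$ with an enveloping-operad-type contribution from $A$. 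Applying $M \circ_{\mathcal{P}}(-)$ and $N \circ_{\mathcal{P}}(-)$ to this filtration expresses $f_1 \square^{\circ_{\mathcal{P}}} f_2$ as a transfinite composition of pushouts of instances of $f_1 \square^{\circ_R} Q^n(g)$, yielding both the positive-cofibration conclusion and the weak-equivalence conclusion simultaneously.

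The main obstacle will be controlling the $\Sigma_n$-equivariant structure on the graded pieces of $\{B_n\}$. They involve orbit objects of the shape $Q^n(g) \wedge_{\Sigma_n} (\text{enveloping piece of } A)$, and one must verify that the $\Sigma_n$-action is free enough on the underlying $R$-module for these strict orbits to compute the corresponding $\Sigma_n$ homotopy orbits. This is precisely where cofibrancy of each $\mathcal{P}(n)$ over $R$ (implied, after base change, by Assumption \ref{COFIB ASSUM}) and the \emph{positive} part of the model structure, which trivializes the problematic level-$0$ contributions, are indispensable. Once this wreath-freeness is established over $R\modd$, the inductive argument of \cite{Pereira2014} goes through mutatis mutandis and delivers both parts of the theorem.
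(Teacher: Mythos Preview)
Your overall outline is reasonable, but the crux of your argument—the claim that the pushout-product axiom for $\circ_R$ on $\Sym(R)$ and the $\Sigma_n$-equivariant analysis are ``a direct transcription of the corresponding statement for $\circ$ on $\Sym(S)$''—is exactly the point at which the argument does \emph{not} go through verbatim. The paper's proof opens by noting that if one simply redoes \cite{Pereira2014} with $R$ in place of $S$, everything works \emph{except} the characterization of cofibrations in \cite[Prop.~3.9]{Pereira2014}, which fails for general $R$. The positive model structure on $\Sym(R)$ is the \emph{projective} structure lifted from $\Sym(S)$, not an intrinsic injective structure; its cofibrations are retracts of relative cell complexes built from maps of the form $R\wedge g$ with $g$ a generating cofibration in $\Sym(S)$, and they admit no simple levelwise description. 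Consequently the $\Sigma_n$-freeness / positive-cofibrancy verifications you appeal to cannot be carried out ``mutatis mutandis'' over $R$.

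The paper's actual strategy circumvents this. Using Lemma~\ref{GEN COFIB LEMMA}, the generating cofibration attached in $f_2$ has the form $\mathcal{P}\circ_R(R\wedge X)\to \mathcal{P}\circ_R(R\wedge Y)$ with $X\to Y$ in $S\modd$. After the filtration of \cite[Prop.~5.20]{Pereira2014}, the relevant pushout-corner maps then carry an explicit $R\wedge(-)$ factor that can be pulled outside, reducing the equivariant cofibrancy check to the $S$-case covered by \cite[Thms.~1.2,~1.3]{Pereira2014}. But this reduction is conditional on knowing that the enveloping map $M_A\to N_A$ is already a positive cofibration in $\Sym(R)$, which is an instance of the theorem itself for $f_2$ the map $\mathcal{P}\to \mathcal{P}\coprod A$; the paper therefore runs a secondary cell induction on $A$ (and a pass through bisymmetric sequences) to close the loop. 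Your sketch omits this bootstrap entirely. Two smaller issues: $N\circ_{\mathcal{P}}(-)$ does \emph{not} preserve all colimits in its left-module variable (the composition product is not linear on the right), so the cell-induction reduction needs the filtration argument rather than a blanket colimit-preservation claim; and Assumption~\ref{COFIB ASSUM} concerns $\Oo$ in $S\modd$, not the arbitrary $R$-operad $\mathcal{P}$ of the theorem, so it cannot be invoked here.
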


When $R=S$, this theorem nearly coincides with \cite[Thm.1.1]{Pereira2014}, and we defer the proof in the general case to the appendix.  For the purpose of proving results stated in \secref{GEN RESULTS SEC}, we will just need the following corollary.

\begin{cor}\label{CIRCO POS THM} Let $\Oo$ be an operad in $S\modd$.
Suppose $f_2: I \ra J$ is a cofibration between cofibrant objects in $\Alg_{\Oo}(R)$. If a map $f_1: M \ra N$ in $\Mod^r_{\Oo}(S)$ is an underlying positive cofibration in $\Sym(S)$, then
$$ f_1 \square^{\circ_{\mathcal{O}}} f_2: (M \circ_{\mathcal{P}} J) \vee_{M \circ_{\mathcal{O}} I} (N \circ_{\mathcal{O}} I) \ra N \circ_{\mathcal{O}} J.$$
will be a positive cofibration in $R\modd$.

Furthermore, this map will be a weak equivalence if either $f_1$ or $f_2$ is a weak equivalence.
\end{cor}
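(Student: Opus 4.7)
The plan is to deduce the corollary from Theorem \ref{CIRCO POS R THM} by taking $\mathcal{P} = R\wedge \Oo$ and applying the identifications of Remark \ref{INDUCEOPERAD REM}.

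First I would translate the two input maps into this setting. The map $f_2: I \ra J$ is a cofibration between cofibrant objects in $\Alg_{R\wedge\Oo}$ via the isomorphism of model categories (\ref{IDENTIFICATION EQ}). For $f_1$, I would replace it by $R\wedge f_1: R\wedge M \ra R\wedge N$, which is a map of right $R\wedge \Oo$-modules in $\Sym(R)$. Since the positive model structure on $\Sym(R)$ is by definition projective-induced from the positive structure on $\Sym(S)$, base-change along $S \ra R$ is left Quillen for the positive structures, so $R\wedge f_1$ is a positive cofibration in $\Sym(R)$. Theorem \ref{CIRCO POS R THM} would then give that $(R\wedge f_1)\,\square^{\circ_{R\wedge\Oo}}\, f_2$ is a positive cofibration in $\Sym(R)$, and is a weak equivalence whenever either $f_1$ or $f_2$ is.

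Next I would identify this map with $f_1\,\square^{\circ_{\Oo}}\, f_2$. The essential point is the natural isomorphism
$$ (R\wedge M) \circ_{R\wedge\Oo} I \;\cong\; M \circ_{\Oo} I $$
for any $I \in \AlgO(R)$. This reduces to the pointwise identity $(R\wedge M(r)) \wedge_R X \cong M(r) \wedge X$ for $X \in R\modd$, which shows that the formula defining $(R\wedge M) \circ_R I$ in $\Sym(R)$ recovers the ``mixed'' composition product $M\circ I$ used in the paper; the coequalizer description of $\circ_{\mathcal{P}}$ then extends the identification to the relative product. Applying this at each vertex of the pushout square defining the pushout circle product produces the desired isomorphism of maps.

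Finally, I would descend from $\Sym(R)$ to $R\modd$. Because $I$ and $J$ are $\Oo$-algebras, hence concentrated in level 0 as symmetric sequences, every object appearing in the pushout circle product is concentrated in level 0. It then suffices to observe that a map of symmetric sequences concentrated in level 0 is a positive cofibration in $\Sym(R)$ precisely when its level-0 component is a positive cofibration in $R\modd$: the generators of positive $\Sym(R)$ supported in level 0 are exactly $R\wedge G_0(k)$ for $k$ a generating positive cofibration in $S\modd$, which at level 0 are the generators of the positive $R$-model structure on $R\modd$.

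The main obstacle I expect is the routine but delicate verification of the natural isomorphism $(R\wedge M) \circ_{R\wedge\Oo} I \cong M \circ_{\Oo} I$, including compatibility of the right $R\wedge\Oo$-module structure on $R\wedge M$ with the original right $\Oo$-module structure on $M$, so that the two coequalizers genuinely agree and the identification is functorial in both variables.
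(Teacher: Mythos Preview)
Your proposal is correct and follows essentially the same route as the paper: set $\mathcal{P}=R\wedge\Oo$, replace $f_1$ by $R\wedge f_1$ (a positive cofibration in $\Sym(R)$ since $R\wedge\underline{\ \ }$ is left Quillen for the positive structures), apply Theorem~\ref{CIRCO POS R THM}, and then observe that on objects concentrated in level~0 the positive structure on $\Sym(R)$ agrees with that on $R\modd$. The paper's proof is terser, leaving the identification $(R\wedge M)\circ_{R\wedge\Oo} I \cong M\circ_{\Oo} I$ implicit, but your more explicit unpacking of that step and of the level-0 restriction is exactly what is being used.
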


\begin{proof}
Since the functor $R\wedge \underline{\text{\hspace{.13in}}}\colon \Sym(S)\to \Sym(R)$ sends positive cofibrations and  trivial cofibrations in $\Sym(S)$ respectively to positive cofibrations and  trivial cofibrations in $\Sym(R)$, the result follows immediately from Theorem \ref{CIRCO POS R THM} applied to $\mathcal P = R \sm \Oo$, $R \sm f_1$ and $f_2$.  Note that the positive model structure on $\Sym(R)$ restricts on level 0 to the positive model structure on $R\modd$.
\end{proof}

\begin{rem}  Trying to directly prove the corollary -- the result we need for this paper -- has led us to the more general  Theorem \ref{CIRCO POS R THM}, and, in particular, the use of operads $\mathcal P$ more general than $R \sm \Oo$.  This generality comes at a price: for a general $R$, the positive model structure on $\Sym(R)$ is more subtle than the positive model structure on $\Sym(S)$.
\end{rem}

\begin{rem} It is straightforward to verify that the operads $R \wedge \Oo$ can also be regarded as operads in $S\modd$, so that (\ref{IDENTIFICATION EQ}) extends to give
\begin{equation}
\Alg_{R \wedge \Oo} \simeq \Alg_{\Oo}(R) \simeq \Alg_{R \wedge \Oo}(S)
\label{IDENTIFICATION2 EQ}
\end{equation}
and
\begin{equation}
\Mod^l_{R \wedge \Oo} \simeq \Mod^l_{\Oo}(R) \simeq \Mod^l_{R \wedge \Oo}(S).
\label{IDENTIFICATION2.5 EQ}
\end{equation}

A slightly more careful analysis shows one also has an inclusion of categories
\begin{equation}\label{IDENTIFICATION3 EQ}
\Mod^r_{R \wedge \Oo} \subset \Mod^r_{R \wedge \Oo}(S).
\end{equation}
It might be surprising at first that this is not an isomorphism. Unwinding definitions, one sees that given $N \in \Mod^r_{R \wedge \Oo}(S)$, $N(n)$ will be a $\Sigma_n \wr R$--module. For any $N$ coming from $\Mod^r_{R \wedge \Oo}$, this $\Sigma_n \wr R$--module structure must be one pulled back along the canonical ring map $\Sigma_n \wr R \to R$, which is not the case in general.
\end{rem}

\subsection{Proofs of results from \secref{GEN RESULTS SEC}} \label{proofs subsection}

\begin{proof}[Proof of Proposition \ref{COFFGT PROP}]

If $f_1$ is the map $* \ra \Oo$, and $f_2: I \ra J$ is map in $\Alg_{\Oo}(R)$, then $f_1 \square^{\circ_{\Oo}} f_2$ is just the map $f_2: I \ra J$, now viewed as a map in $R\modd$.

If $I$ is cofibrant in $\Alg_{\Oo}(R)$, then applying Corollary \ref{CIRCO POS THM} to the map $f_2: * \ra I$, shows that $I$ will be cofibrant in $R\modd$.

Similarly, if $f_2: I \ra J$ is a cofibration between cofibrant objects in $\Alg_{\Oo}(R)$,  we learn that  $f_2:I \to J$ is a cofibration in $R\modd$.
\end{proof}

\begin{proof}[Proof of Proposition \ref{BARCOF PROP}]

For the first statement, we note that $B(M,\Oo,N)$ is the realization of the simplicial object $B_{\bullet}(M,\Oo,N)$, and thus will be cofibrant in $\Sym(S)$ if $B_{\bullet}(M,\Oo,N)$ is Reedy cofibrant in $\Sym(S)^{\Delta^{op}}$.  That this is true, under our hypotheses on $M$ and $N$, is precisely the conclusion of \cite[Thm. 1.6]{Pereira2014}.

Proving the second statement is similar: one sees that $B_{\bullet}(M,\Oo,I)$ is Reedy cofibrant in $R\modd^{\Delta^{op}}$ by noting that the proof of \cite[Thm.1.6]{Pereira2014} (and in particular that of the auxiliary \cite[Lem.5.47]{Pereira2014}) goes through if one simply replaces the very last application of \cite[Thm.1.1]{Pereira2014} by an application of Corollary \ref{CIRCO POS THM}.
\end{proof}

\begin{proof}[Proof of Proposition \ref{BARDEREQ PROP}]
First note that by Corollary \ref{CIRCO POS THM} the functor
\[M\circ_{\Oo} \underline{\text{\hspace{.13in}}}\colon \Alg_{\Oo}(R) \to R\modd\]
sends trivial cofibrations between cofibrant algebras to weak equivalences, and hence, by Ken Brown's lemma \cite[Cor.7.7.2]{hirschhorn}, also preserves all weak equivalences between cofibrant algebras.

Hence, rewriting the map
\[B(M,\Oo,I)\to M \circ_{\Oo}I \]
as
\[M\circ_{\Oo}(B(\Oo,\Oo,I)\to I)\]
one sees it suffices to show that $B(\Oo,\Oo,I)$ is cofibrant in $\Alg_{\Oo}(R)$.

$B(\Oo,\Oo,I)$ is the realization of the simplicial algebra $B_{\bullet}(\Oo,\Oo,I)$, viewed as a simplicial object in $R\modd$.  By \cite[Prop.6.11]{HH}, this agrees with the realization of $B_{\bullet}(\Oo,\Oo,I)$, viewed as a simplicial object in $\Alg_{\Oo}(R)$.  Thus it suffices to show that $B_{\bullet}(\Oo,\Oo,I)$ is Reedy cofibrant in $\Alg_{\Oo}(R)^{\Delta^{op}}$.

Checking this involves showing that the latching maps for  $B_{\bullet}(\Oo,\Oo,I)$ are cofibrations in $\Alg_{\Oo}(R)$.  These depend only on $B_{\bullet}(\Oo,\Oo,I)$ together with its degeneracies, i.e. face maps can be ignored.  From this perspective,
$$ B_{\bullet}(\Oo,\Oo,I) \simeq  \Oo \circ B_{\bullet}(S(1),\Oo,I),$$
where $S(1)$ is our notation for the unit symmetric sequence $(*,S,*,*,\dots)$ under $\circ$.
Hence, letting $\ell_n^{\Oo}$ and $\ell_n$ respectively denote the $n$th latching map construction on $\mathbb N$--graded objects with degeneracies in $\AlgO(R)$ and $R\modd$, one has
\[\ell_n^{\Oo}\left(B_{\bullet}(\Oo,\Oo,I)\right)\simeq \Oo \circ \ell_n\left( B_{\bullet}(S(1),\Oo,I). \right)\]

Since $\Oo \circ \underline{\text{\hspace{.13in}}}: R\modd \to \AlgO(R)$ is a left Quillen functor, it follows that $\ell_n^{\Oo}\left(B_{\bullet}(\Oo,\Oo,I)\right)$  will be a cofibration in  $\Alg_{\Oo}(R)$ if $\ell_n\left(B_{\bullet}(S(1),\Oo,I)\right)$ is a cofibration in $R\modd$.  But the latter map {\em is} a cofibration, since it is a special case of the latching maps shown to be cofibrations in the proof of Proposition \ref{BARCOF PROP}.
\end{proof}

\begin{proof}[Proof of Theorem \ref{NAT IN M THM} (a) and (b)]

In this proof we focus on the identification
$\AlgO(R) \simeq \mathsf{Alg}_{R\wedge \Oo}(S)$
so as to be able to directly apply \cite[Thm.1.1]{Pereira2014}.

For part (a), note first that
\[F^R_M(I)=M\circ_{\Oo}B(\Oo,\Oo,I).\]
That $F^R_M(I)$ preserves weak equivalences in the $I$ variable then follows from the proof of Proposition \ref{BARDEREQ PROP}, where it was shown both that $B(\Oo,\Oo,I)$ is a cofibrant algebra and that $M\circ_{\Oo} \underline{\text{\hspace{.13in}}}$ preserves w.e.s between cofibrant algebras.

To see that weak equivalences are also preserved in the $M$ variable, one uses a similar argument: using the identifications (\ref{IDENTIFICATION2 EQ}) and (\ref{IDENTIFICATION3 EQ}) to change perspective to $S\modd$, one applies \cite[Thm.1.1]{Pereira2014} to any trivial cofibration $f_1\colon M \to N$
in $\Mod^r_{R \wedge \Oo}(S)$
and the map $f_2=\** \to B(\Oo,\Oo,I)$.
One concludes that the functor $M\mapsto F^R_M(I)$ sends trivial cofibrations to weak equivalences.  One now again uses Ken Brown's lemma.

The intuition behind part (b) comes from the observation that (\ref{COMPPROD EQ}), the formula for the composition product $X \circ Y$ of symmetric sequences,  is `linear' in the variable $X$. Our official proof goes as follows.  Note that homotopy fibration sequences in $\Alg_{\Oo}(R)$ are detected by considering them as sequences in $S\modd$.  Again using the identifications (\ref{IDENTIFICATION2 EQ}) and (\ref{IDENTIFICATION3 EQ}) to change perspectives, one immediately reduces to \cite[Thm. 1.8]{Pereira2014} applied to the the operad $R \wedge \Oo$ in $S$--modules.
\end{proof}

\appendix

\section{Proof of Theorem \ref{CIRCO POS R THM}}\label{app a}

We now turn to the task of proving Theorem \ref{CIRCO POS R THM}. If one just tries to redo all the work in \cite{Pereira2014} with a general commutative $S$--algebra $R$ replacing occurrences of $S$, one finds that most of results generalize, with the key exception being the characterization of $S$ cofibrations in \cite[Prop. 3.9]{Pereira2014}, which fails for general $R$. Here we take a somewhat blended approach: we use a string of arguments from \cite{Pereira2014} to ultimate reduce ourselves to a situation covered by  \cite[Thm.1.1]{Pereira2014}.

We begin by noting the following elementary lemma, a consequence of the fact that the positive model structure on $\Sym(R)$ is the projective structure induced from the positive model structure on $\Sym(S)$.

\begin{lem} \label{GEN COFIB LEMMA} A set of generating cofibrations for $\Sym(R)$ can be obtained by applying $R \sm \underline{\text{\hspace{.13in}}}$ to a set of generating cofibrations for $\Sym(S)$.
\end{lem}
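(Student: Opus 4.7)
The plan is to deduce this directly from the standard recognition/transfer principle for cofibrantly generated model structures, specialized to the adjunction $R\sm\underline{\text{\hspace{.13in}}} : \Sym(S) \rightleftarrows \Sym(R) : U$, where $U$ is the forgetful functor. By the definition given in \secref{MODELSTRUCTURE SEC}, the positive model structure on $\Sym(R)$ is precisely the one transferred along $U$: a map $g$ in $\Sym(R)$ is a (trivial) positive fibration exactly when $U(g)$ is a (trivial) positive fibration in $\Sym(S)$. The existence of this structure is already granted in the paper, so the only thing to verify is the explicit generating set.

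Concretely, let $I$ (respectively $J$) be a set of generating cofibrations (respectively generating trivial cofibrations) for the positive model structure on $\Sym(S)$. I would check that $R\sm I := \{R\sm f : f \in I\}$ and $R\sm J$ serve as generating cofibrations and generating trivial cofibrations for $\Sym(R)$. For any map $g : M \to N$ in $\Sym(R)$, the adjunction isomorphism
\[
\Hom_{\Sym(R)}(R \sm X,\, Y) \;\simeq\; \Hom_{\Sym(S)}(X,\, U(Y))
\]
shows that $g$ has the right lifting property with respect to $R\sm I$ (resp.\ $R\sm J$) if and only if $U(g)$ has the right lifting property with respect to $I$ (resp.\ $J$), which by the characterization of generating sets in $\Sym(S)$ is equivalent to $U(g)$ being a trivial positive fibration (resp.\ a positive fibration) in $\Sym(S)$. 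Unwinding the definition of the transferred structure, this is in turn equivalent to $g$ being a trivial positive fibration (resp.\ positive fibration) in $\Sym(R)$, as required.

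The only genuinely model-categorical ingredient needed beyond this adjunction argument is smallness of the sources of maps in $R\sm I$ and $R\sm J$ relative to $R\sm I$--cell and $R\sm J$--cell complexes, which the small object argument requires. This is routine: $U$ preserves filtered colimits (being a right adjoint that creates them in this setting), so smallness of a source $X$ in $\Sym(S)$ transports to smallness of $R\sm X$ in $\Sym(R)$ via the adjunction isomorphism above. I do not expect a real obstacle in any of this; the lemma is essentially bookkeeping that records the familiar fact that transferred generating cofibrations are images of generating cofibrations under the left adjoint.
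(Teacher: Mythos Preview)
Your proposal is correct and matches the paper's own reasoning: the paper does not give a detailed proof at all, merely remarking that the lemma is ``an elementary consequence of the fact that the positive model structure on $\Sym(R)$ is the projective structure induced from the positive model structure on $\Sym(S)$.'' Your write-up simply unpacks this standard transfer argument explicitly, so there is nothing to add.
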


Let us remind ourselves of our goal.  Given $f_1: M \ra N$ in $\Mod^r_{\mathcal{P}}$ and $f_2: A \ra B$ in $\Mod^l_{\mathcal{P}}$, we are considering the `pushout corner map', in $\Sym(R)$, of the commutative square
\begin{equation} \label{basic square eq}
\xymatrix{
M \circ_{\mathcal{P}} A \ar[r]^{M \circ_{\mathcal{P}} f_2 } \ar[d]^{f_1 \circ_{\mathcal{P}} A} & M \circ_{\mathcal{P}} B  \ar[d]^{f_1 \circ_{\mathcal{P}} B}\\
N \circ_{\mathcal{P}} A   \ar[r]^{N \circ_{\mathcal{P}} f_2 }                                   &  N \circ_{\mathcal{P}} B.}
\end{equation}
By this we mean the map from the pushout of the upper left corner of the square to the lower right term.

We wish to show that if $f_2$ is a cofibration between cofibrant objects\footnote{This is a bit redundant: if $A$ is cofibrant, and $f_2$ is a cofibration, then $B$ is necessarily cofibrant.} in $\Mod^l_{\mathcal{P}}$, then if $f_1$ is a positive cofibration in $\Sym(R)$, so is the pushout corner map.  Furthermore, in this situation, if either $f_1$ or $f_2$ is a weak equivalence, so is the pushout corner map.

We will address this last statement at the end of the appendix, and focus on the first statement.  For this, we try to follow the proof of \cite[Thm. 1.1]{Pereira2014}, which is the case when $R=S$. The majority of the arguments in that proof are agnostic as to the category or model structure used -- in particular, the filtrations of \cite[Prop. 5.20]{Pereira2014} cover $R\modd$ -- with the exception of the two instances where \cite[Thms 1.2, 1.3]{Pereira2014} are used.

As in \cite{Pereira2014}, we first assume that $f_2$ is a map between algebras, rather than more general left $\mathcal P$--modules.  In this case, arguing as in \cite[\S 5.4]{Pereira2014}, one reduces to the case where $f_2\colon A \to B$ is the pushout of a generating cofibration. Using \lemref{GEN COFIB LEMMA}, this means that $f_2$ is the lower horizontal map of a pushout in $\Alg_{\mathcal P}$ of the form
\begin{equation*}
\xymatrix{
\mathcal{P} \circ_R (R \wedge X) \ar[d] \ar[r] & \mathcal{P} \circ_R (R \wedge Y) \ar[d]  \\
A \ar[r]^{f_2} & B }
\end{equation*}
with $X \ra Y$ a generating positive cofibration in $S \modd$.

The key is now to use the infinite filtration of the horizontal maps in (\ref{basic square eq}) given by \cite[Prop. 5.20]{Pereira2014}. (This key filtration is a generalization of similar filtrations appearing in \cite[Prop. 5.10]{HH} and \cite[Proofs of Thms.~ 1.4 and 12.5]{elmendorf mandell}.)  Arguing as in \cite[\S 5.4]{Pereira2014}, one is reduced to studying the pushout corner maps of the squares
\begin{equation}\label{MA PUSH EQ}
 \xymatrix{
     M_A(r) \wedge_{R \times \Sigma_r} (R \wedge Q^r_{r-1}) \ar[r] \ar[d] &
       M_A(r) \wedge_{R \times \Sigma_r} (R \wedge Y^{\wedge r}) \ar[d]
    \\
       N_A(r) \wedge_{R \times \Sigma_r} (R \wedge Q^r_{r-1}) \ar[r] &
         N_A(r) \wedge_{R \times \Sigma_r} (R \wedge Y^{\wedge r}),
 }
\end{equation}
where we need to explain our notation.

Firstly, if we view $X \ra Y$ as a functor $\{ \mathbf{0} \ra \mathbf{1}\} \ra S\modd$, we can smash this functor with itself $r$ times, obtaining a cubical diagram
$\{ \mathbf{0} \ra \mathbf{1}\}^{\times r} \ra S\modd$.  We let $Q^r_{r-1}$ denote the colimit of this cube with the terminal object $\mathbf{1}^r$ removed; this comes with an evident map $Q^r_{r-1} \ra Y^{\sm r}$.

Secondly, as in \cite[Definition 5.15]{Pereira2014}, $M_A$ denotes the $M \circ_{\mathcal P} (\mathcal P \coprod A)$, where the coproduct is taken in $\Mod^l_{\mathcal{P}}$.

We wish to show that the pushout corner map of (\ref{MA PUSH EQ}) is a positive cofibration in  $R \modd$.  Since $X \ra Y$ is a positive cofibration in $S \modd$, \cite[Theorem 1.2]{Pereira2014} tells us that $Q^r_{r-1} \ra Y^{\sm r}$ is appropriately cofibrant in the category of $S$--modules with a $\Sigma_r$ action.

If the map $M_A\to N_A$ were a generating positive cofibration in $\Sym(R)$, one would be able to pull a $R \wedge (-)$ factor out of the pushout corner map (by Lemma \ref{GEN COFIB LEMMA}), reducing to the $S$ case which in turn follows by applying \cite[Thms 1.2 and 1.3]{Pereira2014} as in the proof of \cite[Thm. 1.1]{Pereira2014}.

Hence, by standard arguments, it suffices to show that $M_A \to N_A$ is a positive cofibration in $\Sym(R)$.  This would follow from the the special case of our theorem when $f_2$ has the form $i: \mathcal P \ra \mathcal P \coprod A$, which would say that the pushout corner map of the middle square of the diagram
\begin{equation} \label{2nd basic square eq}
\xymatrix{
M \ar@{=}[r] \ar[d]^{f_1}& M \circ_{\mathcal{P}} \mathcal P \ar[rr]^{M \circ_{\mathcal{P}} i } \ar[d]^{f_1 \circ_{\mathcal{P}} \mathcal P} && M \circ_{\mathcal{P}} (\mathcal P \coprod A)  \ar[d]^{f_1 \circ_{\mathcal{P}} (\mathcal P \coprod A)} \ar@{=}[r] & M_A \ar[d]\\
N \ar@{=}[r] & N \circ_{\mathcal{P}} \mathcal P   \ar[rr]^{N \circ_{\mathcal{P}} i }                                   &&   N \circ_{\mathcal{P}} (\mathcal P \coprod A) \ar@{=}[r] & N_A}
\end{equation}
is a positive cofibration in $\Sym(R)$.

Now we use our assumption that $A$ is cofibrant in $\Alg_{\mathcal P}$, and basically proceed as before.  The map $i$ can be assumed to be an infinite composition of maps of the form $\mathcal P \coprod A_{\beta} \xra{\mathcal P \coprod i_{\beta}} \mathcal P \coprod A_{\beta+1}$, where $i_{\beta}$ is the lower horizontal map of a pushout in $\Alg_{\mathcal P}$ of the form
\begin{equation*}
\xymatrix{
\mathcal{P} \circ_R (R \wedge X_{\beta}) \ar[d] \ar[r] & \mathcal{P} \circ_R (R \wedge Y_{\beta}) \ar[d]  \\
A_{\beta} \ar[r] & A_{\beta + 1} }
\end{equation*}
with $X_{\beta} \ra Y_{\beta}$ a generating positive cofibration in $S \modd$.

It suffices to show by induction on $\beta$ that
$N_{A_{\beta}} \coprod_{M_{A_{\beta}}} M_{A_{\beta+1}} \to N_{A_{\beta+1}}$
is a positive cofibration. Note that the induction hypothesis then implies $M_{A_{\beta}} \ra N_{A_{\beta}}$ is a positive cofibration.

After a filtration argument as before, one is left needing to show that the pushout corner map in
\[
 \xymatrix{
     M_{(\mathcal{P} \coprod A_{\beta})}(r) \check{\wedge}_{R \times \Sigma_r} (R \wedge Q^r_{r-1}) \ar[r] \ar[d] &
     M_{(\mathcal{P} \coprod A_{\beta})}(r) \check{\wedge}_{R \times \Sigma_r} (R \wedge Y_{\beta}^{\check{\wedge} r} \ar[d])
    \\
     N_{(\mathcal{P} \coprod A_{\beta})}(r) \check{\wedge}_{R \times \Sigma_r} (R \wedge Q^r_{r-1}) \ar[r] &
     N_{(\mathcal{P} \coprod A_{\beta})}(r) \check{\wedge}_{R \times \Sigma_r} (R \wedge Y_{\beta}^{\check{\wedge} r}).
 }
\]
is a positive cofibration in $\Sym(R)$, where $\check{\wedge}$ denotes the smash product in $\Sym(R)$.

Using the obvious analogue of \lemref{GEN COFIB LEMMA} for $R$ bi-symmetric sequences and \cite[Props 5.43, 5.44]{Pereira2014} (the analogues of \cite[Thms 1.2,1.3]{Pereira2014} for $\mathsf{Sym}(S)$) just as in the argument following (\ref{MA PUSH EQ}), one further reduces to just needing to show that $M_{\mathcal{P} \coprod A_{\beta}}\to N_{\mathcal{P} \coprod A_{\beta}}$  is a positive cofibration in $\biSym(R)$, the category of bi--symmetric sequences of $R$--modules. (The notion of cofibration is defined by analogy with $\Sym(R)$). But since \cite[Prop. 5.19]{Pereira2014} identifies the $(r,s)$ level of this map with $M_{A_{\beta}}(r+s) \to N_{A_{\beta}}(r+s)$, the result follows by our induction hypothesis.

To deal with the case of $f_2$ a general map of left modules one repeats the argument in the last paragraph of the proof of \cite[Thm. 1.1]{Pereira2014}.

Finally, the case where either $f_1$ or $f_2$ are additionally weak equivalences follows by using identifications (\ref{IDENTIFICATION2 EQ}) and (\ref{IDENTIFICATION3 EQ}) to reduce the question to the $S\modd$ level and then applying the monomorphism part of \cite[Thm. 1.1]{Pereira2014}.

\end{document}